\newtheorem{theorem}{Theorem}[section]
\newtheorem{lemma}[theorem]{Lemma}
\newtheorem{corollary}[theorem]{Corollary}
\theoremstyle{definition}
\newtheorem{example}[theorem]{Example}
\newtheorem{remark}[theorem]{Remark}
\numberwithin{equation}{section}
\title[Fractional maximal functions in metric measure spaces]{Fractional maximal functions\\ in metric measure spaces}
\author{Toni Heikkinen}
\author{Juha Lehrb\"ack}
\author{Juho Nuutinen}
\author{Heli Tuominen}
\newcommand\re{\mathbb R}
\newcommand\z{\mathbb Z}
\newcommand\eps{\varepsilon}
\newcommand\M{\operatorname{\mathcal M}}
\providecommand{\ch}[1]{\text{\raise 2pt \hbox{$\chi$}\kern-0.2pt}_{#1}}
\providecommand{\vint}[1]{\mathchoice
          {\mathop{\vrule width 5pt height 3 pt depth -2.5pt
                  \kern -9pt \kern 1pt\intop}\nolimits_{\kern -5pt{#1}}}%
          {\mathop{\vrule width 5pt height 3 pt depth -2.6pt
                  \kern -6pt \intop}\nolimits_{\kern -3pt{#1}}}%
          {\mathop{\vrule width 5pt height 3 pt depth -2.6pt
                  \kern -6pt \intop}\nolimits_{\kern -3pt{#1}}}%
          {\mathop{\vrule width 5pt height 3 pt depth -2.6pt
                  \kern -6pt \intop}\nolimits_{\kern -3pt{#1}}}}
\begin{document}

\begin{abstract}
We study the mapping properties of fractional maximal operators in Sobolev and Campanato spaces in metric measure spaces.  
We show that, under certain restrictions on the underlying metric measure space, 
fractional maximal operators improve the Sobolev regularity of functions and map functions in Campanato spaces to H\"older continuous functions. We also give an example of a space where fractional maximal function of a Lipschitz function fails to be continuous.
\end{abstract}

\keywords{fractional maximal function, fractional Sobolev space, Campanato space, metric measure space}

\subjclass[2010]{42B25, 46E35}

\maketitle

\section{Introduction}

Fractional maximal operators are standard tools in partial differential equations, potential theory and harmonic analysis. In the Euclidean case they are studied for example in \cite{A1}, \cite{A2}, \cite{AH}, \cite{KS}, \cite{KrKu}, \cite{LMPT}, \cite{MW},  and in the metric setting in \cite{GGKK}, \cite{Go}, \cite{Go2}, \cite{HKNT}, \cite{HT}, \cite{PW}, \cite{SWZ}, \cite{W}.
In the Euclidean case, the fractional maximal operator $\M_\alpha$, defined by
\[
\M_\alpha u(x)=\sup_{r>0}\,\frac{r^{\alpha}}{|B(x,r)|}\int_{B(x,r)}|u(y)|\,dy,
\]
has similar smoothing properties as the Riesz potential, see Kinnunen and Saksman \cite{KS}. 
In this paper, we generalize the Euclidean results of \cite{KS} to the metric setting under an annular decay property, which places certain restrictions on the geometry of the space $X$. More precisely,  Theorem \ref{Lp for M} implies that the fractional maximal function of an $L^p$-function, $p>1$, belongs to a (pointwise fractional) Sobolev space $M^{s,p}(X)$, where $s$ depends on the annular decay. 
Another example of a smoothing property is given in Theorem \ref{thm: sobo}, where we show that the fractional maximal operator maps the Sobolev space $M^{1,p}(X)$ to a slightly better Sobolev space $M^{1,p^*}(X)$, where $p^*$ is the conjugate exponent of      $p$. The proof of this result is based on an unpublished proof of MacManus \cite{M2} for the Sobolev boundedness of the usual Hardy-Littlewood maximal operator.

We also study the action of the fractional maximal function in Campanato spaces. In this context our main result is Theorem \ref{main thm}, which states that the fractional maximal function maps functions in Campanato spaces to H\"older continuous functions, provided that the space satisfies the annular decay property. The result is new even in the Euclidean case. 
This is again analogous to the well known properties of the Riesz potential, studied in the metric setting for example in \cite{GV} and \cite{GSV}. Note here that the Campanato estimates for the Riesz potential do not immediately imply the corresponding oscillation estimates for the fractional maximal function. 
In particular, Theorem \ref{main thm} shows that the fractional maximal operator maps H\"older continuous functions to H\"older continuous functions with a better exponent, and functions of bounded mean oscillation to H\"older continuous functions. 

A part of the motivation for our work comes from \cite{HKNT}, where it was shown that similar mapping properties hold for the so called discrete fractional maximal operator even without the annular decay property. In contrast to those results, we conclude this paper with two examples which verify that there is a real obstruction in the study of fractional maximal functions in metric measure spaces. We modify the example given by Buckley in \cite{B} for the standard Hardy-Littlewood maximal function and show that the fractional maximal function of a Lipschitz continuous function may fail to be continuous if the space does not satisfy annular decay. The same is actually true also for the noncentered fractional maximal function, for which the main results given in Sections \ref{section: campanato} and \ref{section: M} hold as well, as is briefly noted at the end of each theorem.

\section{Notation and preliminaries}\label{preliminaries}
We assume throughout the paper that $X=(X, d,\mu)$ is a metric measure space
equipped with a metric $d$ and a Borel regular, doubling outer
measure $\mu$, for which the measure of every open set is positive and
the measure of each bounded set is finite.
The doubling property means that there exists a fixed constant
$c_d>0$, called the doubling constant, such that 
\begin{equation}\label{doubling measure}
\mu(B(x,2r))\le c_d\mu(B(x,r))
\end{equation}
for every ball $B(x,r)=\{y\in X:d(y,x)<r\}$. 

We say that the measure $\mu$ satisfies a lower bound condition if there exist constants $Q\ge 1$ and $c_{l}>0$ such that
\begin{equation}\label{lower bound}
 \mu(B(x,r))\ge c_{l}r^{Q}
\end{equation}
for all $x\in X$ and $r>0$.

We follow the standard procedure that the letter $C$ denotes a positive constant whose value is not necessarily the same at each occurrence.

\subsection*{The fractional maximal function}
Let $\alpha\ge 0$. 
The fractional maximal function of a locally integrable function $u$ is 
\begin{equation}\label{maximal function}
\M_\alpha u(x)=\sup_{r>0}\,r^{\alpha}\vint{B(x,r)}|u|\,d\mu,
\end{equation}
where $u_B=\vint{B}u\,d\mu=\frac1{\mu(B)}\int_Bu\,d\mu$ is the integral average of  $u$ over $B$.
For $\alpha=0$, we have the usual Hardy-Littlewood maximal function, $\M u=\M_0 u$, 
\[
\M u(x)=\sup_{r>0}\,\vint{B(x,r)}|u|\,d\mu. 
\]

\subsection*{Annular decay properties}
Let $0<\delta\le 1$. 
We say that the metric measure space $X$ satisfies the $\delta$-annular decay property, if there exists
a constant $C>0$ such that for all $x\in X$, $R>0$, and $0<h<R$, we have
\begin{equation}\label{B annulus}
\mu\big(B(x,R)\setminus B(x,R-h)\big)\le C\Big(\frac h R\Big)^\delta\mu(B(x,R)).
\end{equation}

Similarly, we say that $X$ satisfies the relative $\delta$-annular decay property, 
if there exist a constant $C>0$ such that for all $x\in X$, $R>0$, and $0<h<R$, we have
\begin{equation}\label{MM annulus}
\mu\big(B\cap(B(x,R)\setminus B(x,R-h))\big)\le C\Big(\frac h {r_B}\Big)^\delta \mu(B)
\end{equation}
for all balls $B$ with radius $r_B\le 3R$.
Note that the relative condition \eqref{MM annulus} 
implies the $\delta$-annular decay property \eqref{B annulus}.

If $X$ is a geodesic space (or more generally a length space), then $X$ satisfies the relative annular decay for some $\delta>0$.
See for instance \cite{B}, \cite[Chapter 9]{HjK} and \cite{Rou} for this fact, examples and for more information on these and related conditions.

Buckley studied in \cite{B}  the action of the usual maximal operator $\M$ on 
$C^{0,\beta}(X)$,
the space of $\beta$-H\"older continuous functions equipped with the seminorm 
\[
\|u\|_{C^{0,\beta}(X)}=\sup_{x\neq y} \frac{|u(x)-u(y)|}{d(x,y)^{\beta}}.
\]
He showed that if a doubling space $X$ satisfies the $\delta$-annular decay property, then 
\begin{equation}\label{c beta->c beta}
\M\colon C^{0,\beta}(X)\to C^{0,\beta}(X)
\end{equation}
is bounded whenever $0<\beta\le\delta$. 
He also gave an example of a doubling space where the maximal function
of a Lipschitz function is not continuous; see Section \ref{section: example}.

\section{Campanato spaces}\label{section: campanato}
In this section, we study the action of the fractional maximal operator $\M_\alpha$ on 
Campanato spaces $\mathcal L^{p,\beta}(X)$. Let $p\ge 1$ and $\beta\in\mathbb{R}$.
A locally integrable function $u$ belongs to $\mathcal L^{p,\beta}(X)$, if
\[
\|u\|_{\mathcal L^{p,\beta}(X)}
=\sup r^{-\beta}\Big(\,\vint{B(x,r)}|u-u_{B(x,r)}|^p\,d\mu\Big)^{1/p}<\infty,
\]
where the supremum is taken over all $x\in X$ and $r>0$. 

If $\beta<0$, the space $\mathcal L^{p,\beta}(X)$ coincides with the Morrey space $L^{p,\beta}(X)$ consisting of functions $u$ for which
\[
\|u\|_{L^{p,\beta}(X)}
=\sup_{x\in X,r>0} r^{-\beta}\Big(\,\vint{B(x,r)}|u|^p\,d\mu\Big)^{1/p}<\infty,
\]
see for example \cite[Theorems 2.1, 2.2 and Corollary 2.3]{Na}.

If $\beta=0$, then $\mathcal L^{p,0}(X)=\mathcal L^{1,0}(X)=\text{BMO}(X)$, the space of functions of bounded mean oscillation. Moreover,
\begin{equation}\label{Lp0=BMO}
\|u\|_{\text{BMO}(X)}\le\|u\|_{\mathcal L^{p,0}(X)}\le C_p\|u\|_{\text{BMO}(X)},
\end{equation}
where $C_p$ depends on the doubling constant and $p$. Here, the first inequality follows directly from the H\" older inequality and the second inequality is a consequence of the John--Nirenberg theorem, see \cite[Theorem 2.2]{B2} and \cite[Corollary 3.10]{GCRDF}.

If $\beta>0,$ then 
\begin{equation}\label{Lpbeta=Holder}
C^{-1}\|u\|_{C^{0,\beta}(X)}\le\|u\|_{\mathcal L^{p,\beta}(X)}\le \|u\|_{C^{0,\beta}(X)},
\end{equation}
where $C$ depends on the doubling constant and $\beta$,  see for example \cite[Theorem 2.4]{Na}.

\begin{theorem}\label{main thm}
Assume that $X$ satisfies the $\delta$-annular decay property \eqref{B annulus} and that
either $0<\alpha\le \delta$, $\beta\neq 0$ and $0\le \alpha+\beta\le \delta$ or 
$0<\alpha< \delta$ and $\beta=0$.
If $u\in \mathcal L^{p,\beta}(X)$, $p\ge 1$ and $\M_\alpha u\not\equiv\infty$,
then $\M_{\alpha}u\in C^{0,\alpha+\beta}(X)$. 
Moreover, there is a constant $C>0$, independent of $u$, 
such that
\begin{equation}\label{main eq}
\|\M_\alpha u\|_{C^{0,\alpha+\beta}(X)}\le C\|u\|_{\mathcal L^{p,\beta}(X)}.
\end{equation}

\end{theorem}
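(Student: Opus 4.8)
The plan is to prove the seminorm estimate \eqref{main eq} directly, since this simultaneously gives $\M_\alpha u\in C^{0,\alpha+\beta}(X)$ and the finiteness of $\M_\alpha u$ everywhere: the pointwise bound $\M_\alpha u(x)\le \M_\alpha u(y)+C\|u\|_{\mathcal L^{p,\beta}(X)}d(x,y)^{\alpha+\beta}$, applied with $y$ a point where $\M_\alpha u$ is finite (one exists since $\M_\alpha u\not\equiv\infty$), propagates finiteness to every $x$. First I would replace $u$ by $v=|u|\ge 0$, which lies in $\mathcal L^{p,\beta}(X)$ with comparable norm (triangle inequality when $\beta\ge 0$, trivially when $\beta<0$), so that the absolute values disappear. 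Writing $g(z,r)=r^\alpha\vint{B(z,r)}v\,d\mu$, so that $\M_\alpha u=\sup_{r>0}g(\cdot,r)$, the elementary inequality $|\sup_r g(x,r)-\sup_r g(y,r)|\le\sup_r|g(x,r)-g(y,r)|$ reduces everything to the estimate $|g(x,r)-g(y,r)|\le C\|u\|_{\mathcal L^{p,\beta}(X)}h^{\alpha+\beta}$, with $h:=d(x,y)$, which I would establish for every fixed $r>0$ by distinguishing the regimes $r\le h$ and $r>h$.

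For small radii $r\le h$ the estimate is comparatively soft. When $\beta<0$ the Morrey bound $\vint{B(z,r)}v\,d\mu\le\|u\|_{\mathcal L^{p,\beta}(X)}r^\beta$ gives $g(z,r)\le\|u\|r^{\alpha+\beta}\le\|u\|h^{\alpha+\beta}$ at once, using $\alpha+\beta\ge 0$. When $\beta\ge 0$ the averages need not be small, so instead I would compare both $\vint{B(x,r)}v\,d\mu$ and $\vint{B(y,r)}v\,d\mu$ to the average over the common ball $B(x,2h)\supseteq B(x,r)\cup B(y,r)$, telescoping through the concentric balls $B(\cdot,2^k r)$ together with one comparable-ball step. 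Each step costs $C\|u\|(2^k r)^\beta$ by the Campanato condition, and the geometric sum is comparable to $h^\beta$ when $\beta>0$ and to $\log(h/r)$ when $\beta=0$; in either case multiplying by $r^\alpha$ (with $r\le h$ and $\alpha>0$) absorbs it into $C\|u\|h^{\alpha+\beta}$.

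The main work is the regime $r>h$, where the annular decay property \eqref{B annulus} is indispensable. Fixing the reference constant $c=v_{B(x,2r)}$ and writing $B_x=B(x,r)$, $B_y=B(y,r)$, I would expand
\[
\vint{B_x}v\,d\mu-\vint{B_y}v\,d\mu
=\Big(\frac1{\mu(B_x)}-\frac1{\mu(B_y)}\Big)\int_{B_x\cap B_y}(v-c)\,d\mu
+\frac1{\mu(B_x)}\int_{B_x\setminus B_y}(v-c)\,d\mu
-\frac1{\mu(B_y)}\int_{B_y\setminus B_x}(v-c)\,d\mu.
\]
The first term is controlled because $|\mu(B_x)-\mu(B_y)|$ is dominated by the measure of the thin annulus $B(x,r)\setminus B(x,r-h)$, which by \eqref{B annulus} is $\le C(h/r)^\delta\mu(B_x)$; combined with $\int_{B(x,2r)}|v-c|\,d\mu\le C\|u\|\mu(B_x)r^\beta$ this contributes at most $C\|u\|h^\delta r^{\alpha+\beta-\delta}\le C\|u\|h^{\alpha+\beta}$ after multiplication by $r^\alpha$, using $\alpha+\beta\le\delta$ and $r>h$. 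The two annulus terms are the heart of the matter: since $B_x\setminus B_y$ lies in the annulus $B(x,r)\setminus B(x,r-h)$, I would cover this annulus by boundedly overlapping balls $B_i$ of radius $h$ contained in $B(x,2r)$, estimate $\int_{B_i}|v-c|\,d\mu\le\mu(B_i)\big(\|u\|h^\beta+|v_{B_i}-c|\big)$ via the Campanato condition on $B_i$, and bound $|v_{B_i}-c|$ by telescoping from $B_i$ up to $B(x,2r)$. Summing over $i$ and invoking \eqref{B annulus} again for $\sum_i\mu(B_i)\le C(h/r)^\delta\mu(B_x)$ yields, after multiplying by $r^\alpha$, a bound of the form $C\|u\|h^\delta r^{\alpha-\delta}S(r,h)$, where $S(r,h)=\sum_{k:\,2^k h\lesssim r}(2^k h)^\beta$ is the telescoping sum.

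The proof then closes by reading off the three cases from the size of $S(r,h)$. For $\beta>0$ the sum is comparable to $r^\beta$ and $\alpha+\beta\le\delta$ gives the bound; for $\beta<0$ it is comparable to $h^\beta$ and $\alpha\le\delta$ suffices; for $\beta=0$ it is comparable to $\log(r/h)$, and the remaining factor equals $h^\alpha(h/r)^{\delta-\alpha}\log(r/h)$, which stays bounded for $r>h$ precisely because $\alpha<\delta$ is assumed strictly in the $\beta=0$ case. This logarithmic borderline is the step I expect to be the main obstacle, and it is the structural reason the hypotheses demand $\alpha<\delta$ rather than $\alpha\le\delta$ when $\beta=0$: a function of bounded mean oscillation may oscillate on the thin annulus essentially as much as on the whole ball, so the entire gain must come from the measure factor $(h/r)^\delta$ furnished by the annular decay, and it only just defeats the logarithm when $\delta-\alpha>0$. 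Since every estimate above is carried out ball by ball, the same argument applies verbatim to the noncentered fractional maximal function.
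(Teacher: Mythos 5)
Your proof is correct, and its skeleton coincides with the paper's: reduce to $u\ge 0$, prove a H\"older estimate for the fixed-radius functions $x\mapsto r^\alpha u_{B(x,r)}$ uniformly in $r>0$, split into the regimes $r\lesssim d(x,y)$ and $r\gtrsim d(x,y)$, and pass to the supremum at the end (your remark about propagating finiteness from one point is a nice explicit touch the paper leaves implicit). Two of your steps, however, genuinely diverge from the paper's proof. First, writing $B_x=B(x,r)$, $B_y=B(y,r)$, $\Delta_x=B_x\setminus B_y$ and $\Delta_y=B_y\setminus B_x$, the paper handles the large-radius regime via MacManus's identity
\[
\vint{B_x}u\,d\mu-\vint{B_y}u\,d\mu
=\frac1{\mu(B_x)}\Big(\int_{\Delta_x}(u-u_{B_y})\,d\mu-\int_{\Delta_y}(u-u_{B_y})\,d\mu\Big),
\]
whose choice of reference constant $u_{B_y}$ makes the measure-difference term cancel identically; your choice $c=v_{B(x,2r)}$ leaves the extra term $\big(\mu(B_x)^{-1}-\mu(B_y)^{-1}\big)\int_{B_x\cap B_y}(v-c)\,d\mu$, which you then must (and correctly do) control by annular decay --- a harmless but real structural difference. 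Second, and more substantially, in the case $\beta=0$ the paper avoids any logarithm: it applies H\"older's inequality with exponents $\delta/\alpha$ and $\delta/(\delta-\alpha)$ to get the factor $(\mu(\Delta)/\mu(B_x))^{\alpha/\delta}\le C(d(x,y)/r)^{\alpha}$, and then invokes the John--Nirenberg theorem (the nontrivial inequality in \eqref{Lp0=BMO}) to pass from $\|u\|_{\mathcal L^{\delta/(\delta-\alpha),0}(X)}$ back to $\|u\|_{\mathcal L^{p,0}(X)}$; there the strictness $\alpha<\delta$ enters through the finiteness of the exponent $\delta/(\delta-\alpha)$. You instead keep the BMO chaining logarithm and defeat it with the power $(d(x,y)/r)^{\delta-\alpha}$, so strictness enters as ``power beats log''. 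Your route is more elementary --- it needs no John--Nirenberg, only doubling, chaining and the annular decay \eqref{B annulus}, since the reduction $\|u\|_{\mathcal L^{1,0}(X)}\le\|u\|_{\mathcal L^{p,0}(X)}$ is just H\"older's inequality --- whereas the paper's route yields a log-free intermediate estimate and outsources the chaining to a quotable lemma, making the written argument shorter. In the remaining cases your treatment matches the paper's up to bookkeeping: for $\beta<0$ both cover the annulus by radius-$d(x,y)$ balls and combine the Campanato/chaining bound with annular decay, and for $\beta>0$ both exploit that the oscillation of $u$ over the annulus is $\lesssim r^\beta$ while the annulus has measure $\lesssim (d(x,y)/r)^\delta\mu(B_x)$, closing with $\alpha+\beta\le\delta$.
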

Note that Theorem \ref{main thm} implies that 
\begin{equation}\label{c beta->c alpha+beta}
\M_\alpha\colon C^{0,\beta}(X)\to C^{0,\alpha+\beta}(X), 
\end{equation}
where $\beta>0,\ \alpha+\beta\le \delta$, and
\begin{equation}\label{bmo->c alpha}
\M_\alpha\colon \text{BMO}(X)\to C^{0,\alpha}(X),
\end{equation}
where $0<\alpha<\delta$,
are bounded operators, when restricted to functions satisfying $\M_\alpha u\not\equiv\infty$. 
If the measure $\mu$ satisfies the lower bound condition \eqref{lower bound}
and $\alpha-Q/p\ge 0$, then also
\begin{equation}\label{Lp->c alpha-Q/p}
\M_\alpha\colon L^p(X)\to C^{0,\alpha-Q/p}(X).
\end{equation}
Notice that \eqref{c beta->c alpha+beta}, \eqref{bmo->c alpha} and \eqref{Lp->c alpha-Q/p}, 
where $\alpha-Q/p>0$, are analogous to the well known properties of the Riesz potential, studied in the metric setting for example in \cite{GV} and \cite{GSV}.

For the proof Theorem \ref{main thm}, we need a lemma which is proved using a chaining argument.
\begin{lemma}[\cite{HKNT}, Lemma 7.1.]
Let $x\in X$, $0<r\le R$ and $y\in B(x,C_0R)$,
and let $u\in \mathcal L^{p,\beta}(X)$.
If $\beta<0$, then
\begin{equation}\label{beta<0}
|u_{B(y,r)}-u_{B(x,R)}|
\le Cr^{\beta}\|u\|_{\mathcal L^{p,\beta}(X)}.
\end{equation}
If $\beta=0$, then
\begin{equation}\label{beta=0}
|u_{B(y,r)}-u_{B(x,R)}|
\le C\log \frac{CR}{r}\|u\|_{\mathcal L^{p,0}(X)}.
\end{equation}
The constant $C$ depends only on the doubling constant, $C_0$ and $\beta$.
\end{lemma}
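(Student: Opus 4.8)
The plan is to prove both estimates by a dyadic chaining argument anchored at the point $y$, followed by a single off-center comparison that returns us to the ball $B(x,R)$. The basic building block is the following estimate for nested balls: if $B'\subseteq B$ are balls with $\mu(B)\le A\mu(B')$, then by the triangle inequality, H\"older's inequality, and the definition of the Campanato seminorm,
\[
|u_{B'}-u_B|
\le \vint{B'}|u-u_B|\,d\mu
\le \frac{\mu(B)}{\mu(B')}\vint{B}|u-u_B|\,d\mu
\le A\,r_B^{\beta}\|u\|_{\mathcal L^{p,\beta}(X)}.
\]
Applying this to the consecutive balls $B(y,2^jr)\subseteq B(y,2^{j+1}r)$, where the doubling property \eqref{doubling measure} gives $A=c_d$, yields $|u_{B(y,2^jr)}-u_{B(y,2^{j+1}r)}|\le c_d(2^{j+1}r)^{\beta}\|u\|_{\mathcal L^{p,\beta}(X)}$.

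Next I would set $\rho=(C_0+1)R$ and let $k$ be the smallest integer with $2^kr\ge\rho$, so that $\rho\le 2^kr<2\rho$ and $k\approx\log_2(R/r)$. Telescoping the consecutive estimates along the chain $B(y,r),B(y,2r),\dots,B(y,2^kr)$ gives
\[
|u_{B(y,r)}-u_{B(y,2^kr)}|\le c_d\sum_{j=0}^{k-1}(2^{j+1}r)^{\beta}\|u\|_{\mathcal L^{p,\beta}(X)}.
\]
When $\beta<0$ the geometric factor $2^{j\beta}$ makes the series convergent, so the sum is bounded by $Cr^{\beta}$ with $C$ depending only on $\beta$ and $c_d$; when $\beta=0$ each term equals $c_d$ and the sum is $c_dk\le C\log(CR/r)$. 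This already produces the leading behavior in \eqref{beta<0} and \eqref{beta=0}.

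Finally I would connect the top ball $B(y,2^kr)$ to the target $B(x,R)$. Since $y\in B(x,C_0R)$ and $\rho\le 2^kr<2\rho$, one checks the inclusions $B(x,R)\subseteq B(y,2^kr)\subseteq B(x,(3C_0+2)R)$, so the doubling property makes $\mu(B(y,2^kr))$ and $\mu(B(x,R))$ comparable with a constant depending only on $c_d$ and $C_0$. The building block then gives $|u_{B(x,R)}-u_{B(y,2^kr)}|\le C(2^kr)^{\beta}\|u\|_{\mathcal L^{p,\beta}(X)}$; for $\beta<0$ this is at most $Cr^{\beta}\|u\|_{\mathcal L^{p,\beta}(X)}$ because $2^kr\ge\rho\ge R\ge r$ and $\beta<0$, and for $\beta=0$ it is merely a constant that is absorbed into the logarithmic term. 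Combining the two parts through the triangle inequality yields \eqref{beta<0} and \eqref{beta=0}. The only genuine care needed is the bookkeeping of constants, so that the final $C$ depends only on $c_d$, $C_0$, and $\beta$, together with the distinction between the convergent geometric sum when $\beta<0$ and the linearly growing sum when $\beta=0$; the off-center comparison in the last step is the one place where the hypothesis $y\in B(x,C_0R)$ is genuinely used, and it is the main obstacle to keeping every constant uniform.
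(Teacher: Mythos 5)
Your proof is correct: the building-block estimate for nested balls (triangle inequality, H\"older, and the Campanato seminorm), the telescoping dyadic chain anchored at $y$, the inclusions $B(x,R)\subseteq B(y,2^kr)\subseteq B(x,(3C_0+2)R)$ giving comparable measures via doubling, and the dichotomy between the convergent geometric sum when $\beta<0$ and the sum of $k\le C\log(CR/r)$ equal terms when $\beta=0$ all check out, with the constants depending only on $c_d$, $C_0$ and $\beta$ as required. The paper itself gives no proof of this lemma---it cites \cite{HKNT}, Lemma 7.1, noting only that it ``is proved using a chaining argument''---and your argument is precisely such a chaining proof, so it follows the approach the paper indicates.
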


\begin{proof}[Proof of Theorem \ref{main thm}]
Since $|u|\in \mathcal L^{p,\beta}(X)$ and 
$\||u|\|_{\mathcal L^{p,\beta}(X)}\le C\|u\|_{\mathcal L^{p,\beta}(X)}$, we may assume that $u\ge 0$.
Let $r>0$ and define $v\colon X\to[0,\infty)$ as $v(x)=r^\alpha u_{B(x,r)}$.
We begin by proving the claim for $v$. Let $x,y\in X$.

\noindent {\bf Case 1.} Assume first that $r\le 2d(x,y)$. 
Let $B=B(x,2d(x,y))$, $B_x=B(x,r)$ and $B_y=B(y,r)$. 
If $\beta<0$, then, by \eqref{beta<0},
\[
\begin{split}
|v(x)-v(y)|
&=|r^\alpha u_{B_x}-r^\alpha u_{B_y}|
\le r^\alpha \big(|u_{B_x}-u_B|+|u_B-u_{B_y}|\big)\\
&\le Cr^{\alpha+\beta}\|u\|_{\mathcal L^{p,\beta}(X)}\\
&\le Cd(x,y)^{\alpha+\beta}\|u\|_{\mathcal L^{p,\beta}(X)}.
\end{split}
\]
Similarly, if $\beta=0$,  estimate \eqref{beta=0} implies that
\[
\begin{split}
\big|r^\alpha u_{B_x}-r^\alpha u_{B_y}\big|
&\le Cr^{\alpha}\log\frac{Cd(x,y)}{r}\|u\|_{\mathcal L^{p,0}(X)}\\
&=Cd(x,y)^{\alpha}\Big(\frac{r}{Cd(x,y)}\Big)^\alpha\log\frac{Cd(x,y)}{r}\|u\|_{\mathcal L^{p,0}(X)}\\
&\le Cd(x,y)^{\alpha}\|u\|_{\mathcal L^{p,0}(X)},
\end{split}
\]
where the last inequality follows by the boundedness of the function 
$f(t)=t^{-\alpha}\log t$ for $t\ge 1$.

If $\beta>0$, then $u$ is $\beta$-H\"older continuous and so
\[
\begin{split}
\big|r^\alpha u_{B_x}-r^\alpha u_{B_y}\big|
&\le r^\alpha\vint{B_x}\vint{B_y}|u(z)-u(w)|\,dz\,dw\\
&\le Cd(x,y)^{\alpha+\beta}\|u\|_{\mathcal L^{p,\beta}(X)}.
\end{split}
\]
{\bf  Case 2.} Suppose then that $r>2d(x,y)$. Let $\Delta_x=B_x\setminus B_y$, 
$\Delta_y=B_y\setminus B_x$ and 
$\Delta=\Delta_x\cup\Delta_y$. As in \cite{M}, we write
\[
\begin{split}
\vint{B_x}u\,d\mu-\vint{B_y}u\,d\mu
&=\frac{1}{\mu(B_x)}\bigg(\int_{B_x}u\,d\mu-\int_{B_y}u\,d\mu
     +\big(\mu(B_y)-\mu(B_x)\big)u_{B_y}\bigg)\\
&=\frac{1}{\mu(B_x)}\bigg(\int_{\Delta_x}u\,d\mu-\int_{\Delta_y}u\,d\mu
     +\big(\mu(\Delta_y)-\mu(\Delta_x)\big)u_{B_y}\bigg)\\
&=\frac{1}{\mu(B_x)}\bigg(\int_{\Delta_x}(u-u_{B_y})\,d\mu
     -\int_{\Delta_y}(u-u_{B_y})\,d\mu\bigg),
\end{split}
\]
which implies that
\begin{equation}\label{est1}
\Big|\,\vint{B_x}u\,d\mu-\vint{B_y}u\,d\mu\Big|
\le C\frac{1}{\mu(B_x)}\int_\Delta|u-u_{B_y}|\,d\mu.
\end{equation}
Suppose that $\beta<0$. 
Let $B_1,B_2,\dots, B_k$ be a maximal collection of disjoint balls of radius $d(x,y)$ centered at $\Delta$. 
Then $\Delta\subset\cup_{i} 2B_i$ and $\cup_{i} B_i\subset \Delta'$, where 
\[
\Delta'=B(x,r+d(x,y))\setminus B(x,r-2d(x,y))\cup B(y,r+d(x,y))\setminus B(y,r-2d(x,y)).
\] 
For each $i$, by \eqref{beta<0} and the H\"older inequality, we have
\begin{equation}\label{est2}
\begin{split}
\vint{2B_i}|u-u_{B_y}|\,d\mu
&\le |u_{2B_i}-u_{B_y}|+\vint{2B_i}|u-u_{2B_i}|\,d\mu\\
&\le Cd(x,y)^\beta \|u\|_{\mathcal L^{p,\beta}(X)}.
\end{split}
\end{equation}
To estimate the measure of $\Delta'$, we first use the annular decay property and the doubling property to obtain
\[
\begin{split}
\mu(B(x,r+d(x,y))\setminus B(x,r-2d(x,y)))
&\le C\Big(\frac{3d(x,y)}{r+d(x,y)}\Big)^\delta\mu(B(x,r+d(x,y)))\\
&\le C\Big(\frac{d(x,y)}{r}\Big)^\delta \mu(B_x),
\end{split}
\]
and similarly, because $\mu(B_{x})$ is comparable with $\mu(B_{y})$ by the doubling property and the assumption $r>2d(x,y)$,
\[
\begin{split}
\mu(B(y,r+d(x,y))\setminus B(y,r-2d(x,y)))
\le C\Big(\frac{d(x,y)}{r}\Big)^\delta \mu(B_x).
\end{split}
\]
Thus
\begin{equation}\label{est3}
\mu(\Delta')\le C\Big(\frac{d(x,y)}{r}\Big)^\delta \mu(B_x).
\end{equation}
Using \eqref{est1}, the facts that $\Delta\subset\cup_{i}2B_{i}$ and $B_{i}\subset\Delta'$ for all $i$, \eqref{est2},  
the disjointedness of the balls $B_{i}$ and \eqref{est3}, we obtain
\[
\begin{split}
|v(x)-v(y)|
&=\Big |\,r^\alpha\vint{B_x}u\,d\mu-r^\alpha\vint{B_y}u\,d\mu\Big |
\le C\frac{r^\alpha}{\mu(B_x)}\int_{\Delta}|u-u_{B_y}|\,d\mu\\
&\le C\frac{r^\alpha}{\mu(B_x)}\sum_{i}\mu(2B_i)\vint{2B_i}|u-u_{B_y}|\,d\mu\\
&\le C\frac{r^\alpha d(x,y)^\beta}{\mu(B_x)}\|u\|_{\mathcal L^{p,\beta}(X)}\sum_{i}\mu(B_i)\\
&\le Cr^\alpha d(x,y)^\beta\frac{\mu(\Delta')}{\mu(B_x)}\|u\|_{\mathcal L^{p,\beta}(X)}\\
&\le Cr^\alpha d(x,y)^\beta\Big(\frac{d(x,y)}{r}\Big)^\delta\|u\|_{\mathcal L^{p,\beta}(X)}\\
&\le Cd(x,y)^{\alpha+\beta}\|u\|_{\mathcal L^{p,\beta}(X)},
\end{split}
\]
where the last inequality follows because $r>d(x,y)$ and $0<\alpha\le\delta$.

Assume then that $\beta=0$ and $0<\alpha<\delta$. 
By \eqref{est1}, the H\"older inequality, the facts that $\Delta\subset 2B_{y}$ and 
$\mu(B_{x})$ is comparable with $\mu(B_{y})$, and \eqref{est3}, we have  
\[
\begin{split}
\Big|\,\vint{B_x}u\,d\mu-\vint{B_y}u\,d\mu\Big|
&\le C\frac{\mu(\Delta)}{\mu(B_x)}^{\alpha/\delta}
\Big(\int_\Delta|u-u_{B_y}|^{\delta/(\delta-\alpha)}\,d\mu\Big)^{1-\alpha/\delta}\\
&\le C\Big(\frac{\mu(\Delta)}{\mu(B_x)}\Big)^{\alpha/\delta}
\Big(\vint{2B_y}|u-u_{B_y}|^{\delta/(\delta-\alpha)}\,d\mu\Big)^{1-\alpha/\delta}\\
&\le C\Big(\frac{d(x,y)}{r}\Big)^{\alpha} \|u\|_{\mathcal L^{\delta/(\delta-\alpha),0}(X)}.
\end{split}
\]
Since, by \eqref{Lp0=BMO}, 
$\|u\|_{\mathcal L^{\delta/(\delta-\alpha),0}(X)}
\le C\|u\|_{\text{BMO}}\le C\|u\|_{\mathcal L^{p,0}(X)}$, 
it follows that
\[
\begin{split}
|v(x)-v(y)|
&=\Big|\,r^\alpha\vint{B_x}u\,d\mu-r^\alpha\vint{B_y}u\,d\mu\Big|
\le Cd(x,y)^{\alpha}\|u\|_{\mathcal L^{p,0}(X)}.
\end{split}
\]
Assume now that $\beta>0$ and $\alpha+\beta\le\delta$. 
Then $u$ is H\"older continuous with exponent $\beta$ and \eqref{Lpbeta=Holder} together with the assumption that $r>d(x,y)$ implies that
\[
|u(z)-u_{B_y}|
\le \vint{B_y}|u(z)-u(w)|\,d\mu(w)
\le Cr^{\beta}\|u\|_{\mathcal L^{p,\beta}(X)}
\]
for each $z\in\Delta$. Hence, using \eqref{est1} and \eqref{est3}, we obtain 
\[
\begin{split}
|v(x)-v(y)|
&\le \frac{r^{\alpha}}{\mu(B_x)}\int_{\Delta}|u-u_{B_y}|d\mu
\le Cr^{\alpha+\beta}\frac{\mu(\Delta)}{\mu(B_x)}\|u\|_{\mathcal L^{p,\beta}(X)}\\
&\le Cr^{\alpha+\beta-\delta}d(x,y)^{\delta}\|u\|_{\mathcal L^{p,\beta}(X)}
\le Cd(x,y)^{\alpha+\beta}\|u\|_{\mathcal L^{p,\beta}(X)}.
\end{split}
\]

Finally, we prove the claim for $\M_{\alpha}u$. 
We may assume that $\M_{\alpha}u(x)\ge\M_{\alpha}u(y)$. 
Let $\eps>0$, and let $r>0$ be such that $r^\alpha u_{B(x,r)}>\M_{\alpha}u(x)-\eps$. 
Then, by the first part of the proof,
\[
\begin{split}
 \M_{\alpha}u(x)-\M_{\alpha}u(y)
 &\le r^\alpha u_{B(x,r)}-r^\alpha u_{B(y,r)}+\eps
 = v(x)-v(y)+\eps\\
 &\le Cd(x,y)^{\alpha+\beta} \|u\|_{\mathcal L^{p,\beta}(X)}+\eps.
\end{split}
\]
The claim follows by letting $\eps\to 0$.
\end{proof}

\begin{remark}
A modification of the proof above shows that the result holds also for the noncentered fractional maximal function
\[
\mathcal{\widetilde M}_\alpha u(x)=\sup_{B(z,r)\ni x}r^\alpha\vint{B(z,r)}|u|\,d\mu.
\]
Let $x,y\in X$. 
We may assume that $u\ge0$ and 
$\mathcal{\widetilde M}_{\alpha}u(x)\ge\mathcal{\widetilde M}_{\alpha}u(y)$.
Let $\eps>0$. Then there exists a ball $B(z,r)$ containing $x$ such that 
\[
\mathcal{\widetilde M}_\alpha u(x)<r_\alpha u_{B(z,r)}+\eps.
\]
Since $y\in B(z,r+d(x,y))$, we have that
\[
\mathcal{\widetilde M}_\alpha u(x)-\mathcal{\widetilde M}_\alpha u(y)
\le r^{\alpha}(u_{B(z,r)}-u_{B(z,r+d(x,y))})+\eps.
\]
Arguments similar to those in the proof of Theorem \ref{main thm} imply that
\[
r^{\alpha}(u_{B(z,r)}-u_{B(z,r+d(x,y))})
\le Cd(x,y)^{\alpha+\beta} \|u\|_{\mathcal L^{p,\beta}(X)}.
\]
The claim follows by letting $\eps\to 0$.
\end{remark}

\section{Sobolev spaces}\label{section: M}
In this section, we show that  the fractional maximal operator $\M_{\alpha}$ maps $L^{p}$-space,$p>1$, to Sobolev spaces, and Sobolev spaces to slightly better Sobolev spaces. 
We prove the results for Sobolev spaces $M^{s,p}(X)$, defined by a pointwise equation. 
These spaces were introduced by Haj\l asz in \cite{Hj1} for $s=1$, and the fractional versions by Yang in \cite{Y}.

Let $s>0$. We say that a measurable function $g\ge0$ is 
{\it a generalized $s$-gradient} of a measurable function $u$, $g\in
\operatorname D^s(u)$, if there is a set $E\subset X$
with $\mu(E)=0$ such that
\begin{equation}\label{m1p}
|u(x)-u(y)|\le d(x,y)^s\big(g(x)+g(y)\big)
\end{equation}
for all $x,y\in X\setminus E$. 
The Sobolev space $M^{s,p}(X)$, $1\le p<\infty$, consists of functions $u\in L^p(X)$ for which there exists a function $g\in L^p(X)\cap \operatorname D^s(u)$. 
The space $M^{s,p}(X)$, equipped with the norm
\begin{equation}\label{m1p norm}
\|u\|_{M^{s,p}(X)}=\bigl(\|u\|_{L^p(X)}^p
                +\inf\|g\|_{L^p(X)}^p\bigr)^{1/p},
\end{equation}
where the infimum is taken over all functions $g\in L^p(X)\cap\operatorname D^s(u)$, is a Banach space \cite[Theorem 8.3]{Hj2}. 

It follows from \eqref{m1p} that every $u\in M^{s,p}(X)$ and $g\in \operatorname D^s(u)$ satisfy the Poincar\'e inequality
\begin{equation}\label{poincare}
\vint{B(x,r)}|u-u_{B(x,r)}|\le Cr^s \vint{B(x,r)}g\,d\mu,
\end{equation}
where $C$ depends only on $s$. Hence the assumption that $X$ supports a Poincar\'e inequality is not needed.  

We will use the following Sobolev type theorem for the fractional maximal operator. 
As in \cite{GPS}, it can be proven easily using the Hardy--Littlewood maximal function theorem; see also \cite{EKM} or \cite{GGKK}.

\begin{theorem}\label{fracM bounded}
Assume that the measure $\mu$ satisfies the lower bound condition \eqref{lower bound}. 
Let $p>1$ and  $0<\alpha<Q/p$. 
There is a constant $C>0$, depending only on 
the doubling constant, constant in the measure lower bound, $p$ and $\alpha$, such that
 \[
 \|\M_{\alpha}u\|_{L^{p^*}(X)}
 \le C\|u\|_{L^p(X)},
 \]
for every $u\in L^{p}(X)$ with $p^*=Qp/(Q-\alpha p)$. 
\end{theorem}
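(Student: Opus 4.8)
The plan is to reduce the Sobolev-type estimate for $\M_\alpha$ to the classical Hardy--Littlewood maximal function theorem by exploiting the measure lower bound \eqref{lower bound}. The key observation is that the defining weight $r^\alpha$ in \eqref{maximal function} can be absorbed into the measure of the ball. Indeed, for a ball $B(x,r)$, the lower bound gives $r^Q\le c_l^{-1}\mu(B(x,r))$, so $r^\alpha\le C\mu(B(x,r))^{\alpha/Q}$. Substituting this into the supremum defining $\M_\alpha u(x)$, I would obtain the pointwise bound
\[
\M_\alpha u(x)
=\sup_{r>0}r^\alpha\vint{B(x,r)}|u|\,d\mu
\le C\sup_{r>0}\mu(B(x,r))^{\alpha/Q}\vint{B(x,r)}|u|\,d\mu.
\]

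Next I would split the average using H\"older's inequality to introduce the exponent $p$. Writing $\vint{B}|u|\,d\mu\le\big(\vint{B}|u|^p\,d\mu\big)^{1/p}$ and then factoring carefully, the natural route (following \cite{GPS}) is to interpolate: bound one copy of $|u|$ by $\M(|u|^p)^{1/p}$ type quantities while gaining an integrability exponent. Concretely, I expect to arrive at an estimate of the form
\[
\M_\alpha u(x)\le C\big(\M u(x)\big)^{1-\theta}\big(\|u\|_{L^p}\big)^{\theta}
\]
for a suitable $\theta=\theta(p,\alpha,Q)$, where $\M$ is the ordinary Hardy--Littlewood maximal operator; the exponents must be chosen so that raising to the power $p^*$ and integrating recovers the correct scaling, i.e. so that $(1-\theta)p^*=p$. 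Solving the bookkeeping with $p^*=Qp/(Q-\alpha p)$ forces $\theta=\alpha p/Q$, which is precisely where the hypothesis $\alpha<Q/p$ enters to keep $\theta\in(0,1)$.

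With the pointwise interpolation inequality in hand, the final step is purely functional-analytic: I would raise to the power $p^*$, integrate over $X$, and apply the boundedness of $\M$ on $L^p(X)$ (which holds in any doubling space for $p>1$). This yields
\[
\|\M_\alpha u\|_{L^{p^*}(X)}^{p^*}
\le C\|u\|_{L^p}^{\theta p^*}\int_X\big(\M u(x)\big)^{(1-\theta)p^*}\,d\mu(x)
= C\|u\|_{L^p}^{\theta p^*}\,\|\M u\|_{L^p(X)}^{p}
\le C\|u\|_{L^p}^{p^*},
\]
using $(1-\theta)p^*=p$ in the middle equality. Taking the $p^*$-th root gives the claimed bound, with $C$ depending only on the doubling constant, $c_l$, $p$, and $\alpha$.

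I expect the main obstacle to be the precise form of the pointwise interpolation inequality: one must distribute the weight $\mu(B)^{\alpha/Q}$ and the average $\vint{B}|u|\,d\mu$ so that the global $L^p$ norm appears with the right power while the residual factor is controlled by $\M u(x)$ uniformly in $r$. The cleanest implementation is probably to estimate, for each fixed $r$, the local average by $\M u(x)$ for the $L^1$ part and by a tail bound $\mu(B)^{-1/p}\|u\|_{L^p}$ for the remaining integrability, then optimize (or simply split at a threshold radius). This is the step where the hypotheses $p>1$ and $0<\alpha<Q/p$ are genuinely used, and it is the part of \cite{GPS} the statement alludes to; the rest is routine once the scaling is correct.
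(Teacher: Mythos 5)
Your proposal is correct and is essentially the argument the paper has in mind: the paper defers the proof to \cite{GPS}, whose short proof is exactly this Hedberg-type reduction --- absorb $r^\alpha$ into $\mu(B(x,r))^{\alpha/Q}$ via the lower bound \eqref{lower bound}, establish the pointwise bound $\M_\alpha u(x)\le C(\M u(x))^{1-\alpha p/Q}\|u\|_{L^p(X)}^{\alpha p/Q}$ by H\"older, and integrate using the $L^p$-boundedness of the Hardy--Littlewood maximal operator on doubling spaces. Your exponent bookkeeping ($\theta=\alpha p/Q$, $(1-\theta)p^*=p$) and the tail estimate $\vint{B}|u|\,d\mu\le\mu(B)^{-1/p}\|u\|_{L^p(X)}$ are exactly right, so the sketch closes with no gaps.
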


The following theorem is a generalization of the main result of \cite{KS} to the metric setting. 
It shows that the fractional maximal operator is a smoothing operator.
More precisely,  the fractional maximal function of an $L^{p}$-function $u$ has a generalized gradient, and both $\M_{\alpha}u$ and the generalized gradient belong to a higher Lebesgue space than $u$. 

\begin{theorem}\label{Lp for M}
Assume that the measure $\mu$ satisfies the lower bound condition \eqref{lower bound}
and that $X$ satisfies the $\delta$-annular decay property \eqref{B annulus}.
Assume that $u\in L^{p}(X)$ with $1<p<Q$.
Let 
\[
\delta\le\alpha<Q/p,  
\quad p^{*}=Qp/(Q-\alpha p)
\quad\text{and}\quad q=Qp/(Q-(\alpha-\delta) p).
\]
Then there is a constant $C>0$ depending only on the doubling constant and the constant of the $\delta$-annular decay property such that $C\M_{\alpha-\delta}u$ is a generalized $\delta$-gradient of $\M_{\alpha}u$. Moreover, 
 \[
 \|\M_{\alpha}u\|_{L^{p^{*}}(X)}
 \le C\|u\|_{L^p(X)}
 \quad\text{and}\quad
 \|\M_{\alpha-\delta}u\|_{L^q(X)}
 \le C \|u\|_{L^p(X)},
 \]
 where $C$ depends only on the doubling constant, the constant in the measure lower bound,  
$p$ and $\alpha$.
\end{theorem}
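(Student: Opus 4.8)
The plan is to separate the three assertions. The two integral estimates are immediate from Theorem \ref{fracM bounded}: the bound $\|\M_\alpha u\|_{L^{p^*}(X)}\le C\|u\|_{L^p(X)}$ is exactly the case of exponent $\alpha$, which is legitimate since $0<\delta\le\alpha<Q/p$, and $\|\M_{\alpha-\delta}u\|_{L^q(X)}\le C\|u\|_{L^p(X)}$ is the case of exponent $\alpha-\delta$, which satisfies $0\le\alpha-\delta<Q/p$ and yields precisely the exponent $q$. In the borderline case $\alpha=\delta$ one has $\M_{\alpha-\delta}u=\M u$ and $q=p$, so the second estimate is just the Hardy--Littlewood maximal theorem, valid because $p>1$. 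These bounds also show that $\M_\alpha u$ and $\M_{\alpha-\delta}u$ are finite $\mu$-almost everywhere, so in verifying that $C\M_{\alpha-\delta}u$ is a generalized $\delta$-gradient of $\M_\alpha u$ I may take the exceptional set $E$ in \eqref{m1p} to be the null set where either function is infinite.

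The core of the argument is the pointwise inequality $|\M_\alpha u(x)-\M_\alpha u(y)|\le C\,d(x,y)^\delta(\M_{\alpha-\delta}u(x)+\M_{\alpha-\delta}u(y))$ for $x,y\notin E$. As in the proof of Theorem \ref{main thm} I may assume $u\ge0$ and, by symmetry, that $\M_\alpha u(x)\ge\M_\alpha u(y)$; I fix $\eps>0$ and choose $r>0$ with $r^\alpha u_{B(x,r)}>\M_\alpha u(x)-\eps$, and set $t=d(x,y)$. If $r\le 2t$, then the estimate is trivial, since
\[
\M_\alpha u(x)-\M_\alpha u(y)\le r^\alpha u_{B(x,r)}+\eps=r^\delta\,r^{\alpha-\delta}u_{B(x,r)}+\eps\le (2t)^\delta\M_{\alpha-\delta}u(x)+\eps .
\]
The decisive point in the remaining case $r>2t$ — and where the present $L^p$ setting forces a departure from the chaining argument of Theorem \ref{main thm} — is that one must \emph{not} compare $u_{B(x,r)}$ with the concentric average $u_{B(y,r)}$, but with the average over the enlarged ball $B(y,r+t)\supset B(x,r)$. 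Using $\int_{B(x,r)}u\,d\mu\le\int_{B(y,r+t)}u\,d\mu$ and $\M_\alpha u(y)\ge(r+t)^\alpha u_{B(y,r+t)}$, I get
\[
\M_\alpha u(x)-\M_\alpha u(y)
\le r^\alpha u_{B(x,r)}-(r+t)^\alpha u_{B(y,r+t)}+\eps
\le r^\alpha u_{B(x,r)}\Big(1-\Big(\tfrac{r+t}{r}\Big)^\alpha\tfrac{\mu(B(x,r))}{\mu(B(y,r+t))}\Big)+\eps .
\]

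It then remains to show that the bracketed factor is at most $C(t/r)^\delta$. Since $(1+t/r)^\alpha\ge1$, it suffices to prove $\mu(B(y,r+t))\le\big(1+C(t/r)^\delta\big)\mu(B(x,r))$. This is exactly where the $\delta$-annular decay property \eqref{B annulus} enters: from $B(x,r)\subset B(y,r+t)\subset B(x,r+2t)$ the excess mass satisfies
\[
\mu(B(y,r+t))-\mu(B(x,r))\le\mu\big(B(x,r+2t)\setminus B(x,r)\big)\le C\Big(\tfrac{2t}{r+2t}\Big)^\delta\mu(B(x,r+2t)),
\]
and since $r>2t$ forces $r+2t<2r$, the doubling property \eqref{doubling measure} turns the right-hand side into $C(t/r)^\delta\mu(B(x,r))$. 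Feeding this back and using $r^{\alpha-\delta}u_{B(x,r)}\le\M_{\alpha-\delta}u(x)$ gives $\M_\alpha u(x)-\M_\alpha u(y)\le C\,t^\delta\M_{\alpha-\delta}u(x)+\eps$; letting $\eps\to0$ and symmetrizing in $x$ and $y$ completes the proof. I expect the main obstacle to be precisely this radius shift together with the measure comparison: a naive comparison at equal radii $r$ breaks down, because for the radius nearly maximizing $\M_\alpha u(x)$ the average $u_{B(y,r)}$ need not approximate $\M_\alpha u(y)$ (the maximizing radius at $y$ is genuinely larger), and such an estimate would be off by a factor $(r/t)^\delta$; replacing $B(y,r)$ by $B(y,r+t)$ and controlling the annular excess via \eqref{B annulus} is what restores the correct exponent.
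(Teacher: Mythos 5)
Your proposal is correct and follows essentially the same route as the paper: the same case split on $r$ versus $d(x,y)$, the same key radius shift comparing $B(x,r)$ with $B(y,r+d(x,y))$, the inclusion $B(x,r)\subset B(y,r+d(x,y))\subset B(x,r+2d(x,y))$, and the $\delta$-annular decay plus doubling to control the excess measure, with the norm bounds delegated to Theorem \ref{fracM bounded}. Your only departures are cosmetic (factoring out $r^\alpha u_{B(x,r)}$ rather than manipulating differences of averages directly) and one point of extra care the paper glosses over, namely invoking the Hardy--Littlewood maximal theorem for the borderline case $\alpha=\delta$, where Theorem \ref{fracM bounded} does not literally apply to $\M_{\alpha-\delta}=\M_0$.
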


\begin{proof}
 We may assume that $u\ge 0$. Let $x,y\in X$. Assume that $\M_{\alpha}u(x)\ge \M_{\alpha}u(y)$. 
 Let $\eps>0$ and let $r>0$ such that
 \[
r^{\alpha}\vint{B(x,r)}u\,d\mu >\M_{\alpha}u(x) - \eps.
 \]
 If $r\le d(x,y)$, then, as $u\ge0$,
 \[
 \M_{\alpha}u(x) -\M_{\alpha}u(y)
\le r^\delta r^{\alpha-\delta}\vint{B(x,r)}u\,d\mu+\eps
\le d(x,y)^\delta\M_{\alpha-\delta}u(x)+\eps.
\]
If $r>d(x,y)$, we write $a=d(x,y)$ and use the doubling property to obtain
\begin{align*}
 \M_{\alpha}u(x) -\M_{\alpha}u(y)
& < r^{\alpha}\vint{B(x,r)}u\,d\mu+\eps-(r+a)^{\alpha}\vint{B(y,r+a)}u\,d\mu\\
&\le r^{\alpha}\bigg(\frac1{\mu(B(x,r))}-\frac1{\mu(B(y,r+a))}\bigg)\int_{B(x,r)}u\,d\mu+\eps\\
&= r^{\alpha}\frac{\mu(B(y,r+a))-\mu(B(x,r))}{\mu(B(y,r+a))}\vint{B(x,r)}u\,d\mu+\eps\\
&\le Cr^{\alpha}\frac{\mu(B(x,r+2a)\setminus B(x,r))}{\mu(B(x,r+2a))}\vint{B(x,r)}u\,d\mu +\eps.
\end{align*}
The $\delta$-annular decay property together with the assumption $r>d(x,y)$ implies that
\[
\frac{\mu(B(x,r+2a)\setminus B(x,r))}{\mu(B(x,r+2a))}
\le K\left(\frac a{r+2a}\right)^\delta 
\le K\left(\frac{d(x,y)}r\right)^\delta,
\]
and hence
\[
\M_{\alpha}u(x) -\M_{\alpha}u(y)
 \le C d(x,y)^\delta\M_{\alpha-\delta}u(x)+\eps.
\]
By letting $\eps\to0$ and changing the roles of $x$ and $y$, we have that 
\[
| \M_{\alpha}u(x) -\M_{\alpha}u(y)|
 \le C d(x,y)^{\delta}\big(\M_{\alpha-\delta}u(x)+\M_{\alpha-\delta}u(y)\big).
\]
Hence $C\M_{\alpha-\delta}u$ is a generalized $\delta$-gradient of $\M_{\alpha}u$.
The norm estimates for $\M_{\alpha}u$ and  $\M_{\alpha-\delta}u$  follow from Theorem 
\ref{fracM bounded}.
\end{proof}

\begin{remark}
 Under the assumptions of Theorem \ref{Lp for M}, it holds true that 
 $\M_{\alpha}u\in M^{\delta,q}_{\text{loc}}(X)$ and 
 \[
 \|\M_{\alpha}u\|_{M^{\delta,q}(A)}
 \le\mu(A)^{1/q-1/p^{*}}\|u\|_{L^p(A)}
 \]
 for all open sets $A\subset X$ with $\mu(A)<\infty$.
\end{remark}

\begin{remark}
The above proof with the ball $B(x,r)$ replaced by a ball $B(z,r)$ for which 
$\mathcal{\widetilde M}u(x)<r^\alpha u_{B(z,r)}+\eps$  
and $B(y,r+a)$ replaced by the ball $B(z,r+a)$ shows that the result holds also for
the noncentered fractional maximal function.
\end{remark}

In the next theorem, we show that if $u$ is a Sobolev function, then its fractional maximal function belongs to a Sobolev space with the Sobolev conjugate exponent. 
The proof is a modification of the result that the usual Hardy--Littlewood maximal operator is bounded in Sobolev spaces if the underlying space satisfies the relative $1$-annular decay property.
Since the original proof of MacManus in \cite{M2} (for $\alpha=0$) is unpublished, we give in the proof below all the details.

\begin{theorem}\label{thm: sobo}
Assume that the measure $\mu$ satisfies the lower bound condition and $X$ satisfies the relative 
$1$-annular decay property \eqref{MM annulus}. 
Let $p>1$, $u \in M^{1,p}(X)$ and $0<\alpha<Q/p$. 
Then $\M_{\alpha} u \in M^{1,p^{*}}(X)$ with $p^{*}= Qp/(Q - \alpha p)$
and there is a constant $C>0$, depending only on the doubling constant, the constant in the measure lower bound, $p$ and $\alpha$, such that
\begin{equation}\label{rajoittuneisuus}
\|\M_{\alpha} u\|_{M^{1,p^{*}}(X)} 
\le C\|u\|_{M^{1,p}(X)}.
\end{equation}
\end{theorem}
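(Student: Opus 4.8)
The plan is to exhibit an explicit generalized $1$-gradient of $\M_\alpha u$ lying in $L^{p^*}(X)$ and to bound its norm by $\|u\|_{M^{1,p}(X)}$. Fix $g\in L^p(X)\cap\operatorname D^1(u)$. Since $\big|\,|u(x)|-|u(y)|\,\big|\le|u(x)-u(y)|$, the same $g$ is a generalized $1$-gradient of $|u|$, and $\M_\alpha u=\M_\alpha|u|$, so I may assume $u\ge0$. The candidate gradient is $C\M_\alpha g$: by Theorem \ref{fracM bounded}, applied to $g\in L^p$ and to $u\in L^p$ (both with $0<\alpha<Q/p$), one has $\|\M_\alpha u\|_{L^{p^*}}\le C\|u\|_{L^p}$ and $\|\M_\alpha g\|_{L^{p^*}}\le C\|g\|_{L^p}$. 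Hence \eqref{rajoittuneisuus} will follow, after taking the infimum over $g$, from the pointwise estimate
\[
|\M_\alpha u(x)-\M_\alpha u(y)|\le C\,d(x,y)\big(\M_\alpha g(x)+\M_\alpha g(y)\big)\quad\text{for a.e. }x,y.
\]
As in the last step of the proof of Theorem \ref{main thm}, it suffices to fix $t>0$, set $v(x)=t^\alpha u_{B(x,t)}$, and prove the same bound for $|v(x)-v(y)|$ with a constant independent of $t$; the $\eps$-argument with a near-optimal radius then transfers it to $\M_\alpha u$.

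Writing $a=d(x,y)$, $B_x=B(x,t)$ and $B_y=B(y,t)$, I split into two cases. When $t\le a$ I argue directly from the defining inequality \eqref{m1p}: averaging $|u(z)-u(w)|\le d(z,w)(g(z)+g(w))$ over $z\in B_x$, $w\in B_y$ and using $d(z,w)\le 2t+a\le 3a$ gives $|u_{B_x}-u_{B_y}|\le 3a\big(\vint_{B_x}g+\vint_{B_y}g\big)$, whence, by \eqref{maximal function}, $|v(x)-v(y)|=t^\alpha|u_{B_x}-u_{B_y}|\le 3a\big(\M_\alpha g(x)+\M_\alpha g(y)\big)$. This case uses no annular decay.

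The substantial case is $t>a$, where $B_x$ and $B_y$ overlap in most of their mass and I must capture the cancellation that makes $u_{B_x}-u_{B_y}$ of order $a$ rather than $t$. With $\Delta_x=B_x\setminus B_y$, $\Delta_y=B_y\setminus B_x$ and the neutral reference ball $B^*=B(x,2t)$, the algebraic identity underlying \eqref{est1} can be rewritten as
\[
\mu(B_x)(u_{B_x}-u_{B_y})=\int_{\Delta_x}(u-u_{B^*})\,d\mu-\int_{\Delta_y}(u-u_{B^*})\,d\mu+(u_{B^*}-u_{B_y})\big(\mu(B_x)-\mu(B_y)\big).
\]
The last term is controlled cleanly: the relative $1$-annular decay \eqref{MM annulus} bounds $|\mu(B_x)-\mu(B_y)|\le\mu(\Delta_x\cup\Delta_y)\le C(a/t)\mu(B_x)$, while the Poincar\'e inequality \eqref{poincare} on $B^*$ together with the doubling property gives $|u_{B^*}-u_{B_y}|\le Ct\vint_{B^*}g$; their product is $\le Ca\,\mu(B_x)\vint_{B^*}g$. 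After multiplying by $t^\alpha/\mu(B_x)$ this contributes $\le Ca\,t^\alpha\vint_{B^*}g\le Ca\,\M_\alpha g(x)$, as required.

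The main obstacle is the signed oscillation term $\int_{\Delta_x}(u-u_{B^*})\,d\mu-\int_{\Delta_y}(u-u_{B^*})\,d\mu$. The pointwise size of $u-u_{B^*}$ on the crescents is only of order $t\,g$, so a crude absolute-value estimate $\int_{\Delta_x\cup\Delta_y}|u-u_{B^*}|\,d\mu$, although it carries the small factor $\mu(\Delta_x\cup\Delta_y)\sim (a/t)\mu(B_x)$ coming from \eqref{MM annulus}, loses a logarithmic factor $\log(t/a)$ when one chains from the scale $a$ of the crescents up to the scale $t$ of $B^*$; such a logarithm cannot be absorbed into a fixed gradient, since $t$ is the (unbounded) optimal radius. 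The clean bound of order $a$ must instead exploit the cancellation between the two opposite crescents, on which $u-u_{B^*}$ takes comparable values. This is exactly where the relative annular decay \eqref{MM annulus} — rather than the weaker plain annular decay \eqref{B annulus} used in Theorems \ref{main thm} and \ref{Lp for M} — is indispensable: applied to balls of every radius $\le 3t$ it controls the portion of each crescent seen at every scale of the chaining, and this uniform control across scales removes the logarithm and yields $|u_{B_x}-u_{B_y}|\le Ca\vint_{B^*}g$. Granting this, $|v(x)-v(y)|=t^\alpha|u_{B_x}-u_{B_y}|\le Ca\,t^\alpha\vint_{B^*}g\le Ca\,\M_\alpha g(x)$, which completes the pointwise inequality; assembling it with the two applications of Theorem \ref{fracM bounded} above and taking the infimum over $g$ gives \eqref{rajoittuneisuus}.
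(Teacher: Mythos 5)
Your setup (reduction to $v(x)=t^\alpha u_{B(x,t)}$, the easy case $t\le a$, the algebraic identity with the reference ball $B^*$, and the control of the term $(u_{B^*}-u_{B_y})(\mu(B_x)-\mu(B_y))$ via relative annular decay and Poincar\'e) is all correct and parallels the paper. But the heart of the proof is exactly the step you wave at and do not prove: the claim that ``cancellation between the two crescents'' plus relative $1$-annular decay removes the logarithm and yields
\[
|u_{B_x}-u_{B_y}|\le C\,d(x,y)\,\vint{B^*}g\,d\mu .
\]
This is a genuine gap, and moreover the claim is not the right statement to aim for. There is no usable cancellation between $\Delta_x$ and $\Delta_y$: a Haj\l asz gradient gives no relation between the values of $u$ on the two crescents, which lie on opposite sides of the pair $x,y$. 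And the logarithm is not an artifact of a crude estimate: in the paper's chaining argument the kernel satisfies $K(w)\le C\,d(x,y)\log\frac{5r}{|r-d(w,x)|}$, and for $w$ close to the sphere $\{d(\cdot,x)=r\}$ the number of scales $k$ with $r_k>d(x,y)$ and $B_k(w)\cap A\ne\emptyset$ really is of order $\log\frac{r}{|r-d(w,x)|}$; the relative annular decay bounds each scale's contribution by $C\min\{r_k,d(x,y)\}$ but cannot reduce the \emph{number} of contributing scales. So the $L^1$-average bound above, and with it your candidate gradient $C\M_\alpha g$, cannot be reached by this route. The paper's own Theorem \ref{thm: sobo2} confirms that this is a borderline phenomenon: the gradient $C\M_\alpha g$ works only when $s<\delta$, whereas the present theorem is exactly the critical case $s=\delta=1$, where the kernel bound $d(x,y)^s\log\frac{5r}{|r-d(w,x)|}$ is what one gets.

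The paper's resolution is different and is where the hypothesis $p>1$ enters: fix $1<q<p$ and take as candidate gradient $\tilde g=\big(\M_{\alpha q}(g^q)\big)^{1/q}$, not $\M_\alpha g$. One keeps the logarithm and absorbs it by H\"older's inequality,
\[
\frac{1}{\mu(B(x,r))}\int_A|u-u_{B(x,r)}|\,d\mu
\le C\,d(x,y)\Big(\vint{B(x,5r)}g^q\,d\mu\Big)^{1/q}\Big(\vint{B(x,5r)}L^{q'}\,d\mu\Big)^{1/q'},
\]
where $L(w)=\log\frac{5r}{|r-d(w,x)|}$; the relative $1$-annular decay applied to the annuli $A_i=\{4^{-i}r\le|r-d(\cdot,x)|<4^{-i+1}r\}$ gives $\mu(A_i)\le C4^{-i}\mu(B(x,r))$, hence $\vint{B(x,5r)}L^{q'}\,d\mu\le C\sum_i(1+i)^{q'}4^{-i}<\infty$ uniformly in $r$. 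Multiplying by $r^\alpha$ then produces $C\,d(x,y)\,\tilde g(x)$, and the $L^{p^*}$ bound $\|\tilde g\|_{p^*}\le C\|g\|_p$ follows from Theorem \ref{fracM bounded} applied to $g^q\in L^{p/q}$ with $p/q>1$. If you want to salvage your write-up, replace your unproven ``no-log'' claim by this $q$-trick; everything else you wrote can stay.
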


\begin{proof}
Let $u \in M^{1,p}(X)$ and let $g\in L^p(X)$ be a generalized gradient of $u$. 
We may assume that $u \geq 0$ since $|u| \in M^{1,p}(X)$ and $g$ is a generalized gradient of $|u|$. 
Fix $1<q<p$ and define 
\[
\tilde{g}= \big(\M_{\alpha q} (g^{q})\big)^{1/q}. 
\]
Since $p/q>1$, Theorem \ref{fracM bounded} implies that
\[
\|\tilde{g}\|_{p^{*}} 
= \|\M_{\alpha q} (g^{q})\|^{1/q}_{\frac{Q\frac{p}{q}}{Q-(\alpha q) \frac{p}{q}}} 
\le C \|g^{q}\|^{1/q}_{\frac{p}{q}} = C\|g\|_{p}.
\]
If we can show that $\tilde g$ is a generalized gradient of $\M_{\alpha} u$, then this together with 
Theorem \ref{fracM bounded} for $u$ implies norm estimate \eqref{rajoittuneisuus}. 

We are going to show that
\begin{equation}\label{todistettava}
\Big |r^{\alpha}\vint{B (x,r)} u \, d\mu - r^{\alpha}\vint{B (y,r)} u \, d\mu\Big| 
\leq  C d(x,y) (\tilde{g}(x) +\tilde{g}(y))
\end{equation}
for almost all $x$, $y \in X$ and all $r>0$. This implies that
\begin{equation}\label{gradientti}
|\M_{\alpha} u(x) - \M_{\alpha} u(y)| 
\leq C d(x,y)(\tilde{g}(x) +\tilde{g}(y))
\end{equation}
for almost every $x$, $y \in X$, which proves our theorem.
 
The proof of \eqref{todistettava} is easy if $r \le 3d(x,y)$:
Since $g$ is a generalized gradient of $u$,
\[
|u(z)-u(w)| 
\le Cd(z,w)(g(z) + g(w)) 
\le C d(x,y)(g(z) + g(w))
\]
for almost all $z \in B(x,r)$ and $w \in B(y,r)$. 
By integrating both sides with respect to $z$ and $w$ and using the H\"older inequality, we obtain
\begin{align*}
\Big | \, \vint{B(x,r)} u \, d\mu - \vint{B (y,r)} u \, d\mu\Big| 
\leq  C \,d(x,y) \bigg( \Big( \, \vint{B (x,r)} g^{q} \, d\mu \Big)^{1/q}
+ \Big( \, \vint{B(y,r)} g^{q} \, d\mu \Big)^{1/q} \bigg).
\end{align*}
Now (\ref{todistettava}) follows by multiplying both sides by $r^{\alpha}$ and using the definition of 
$\tilde g$.

Suppose then that $r> 3d(x,y)$. 
Let $\Delta_{y} = B(y,r) \setminus B(x,r)$, $\Delta_{x} = B(x,r) \setminus B(y,r)$ and 
$\Delta = \Delta_{y} \cup \Delta_{x}$. 
As in the proof of Theorem \ref{main thm}, we have 
\[
\vint{B(y,r)} u \, d\mu - \vint{B(x,r)} u \, d\mu 
= \frac1{\mu(B(y,r))}\int_{\Delta_{y}}(u-u_{B(x,r)}) \, d\mu - \int_{\Delta_{x}}(u-u_{B(x,r)}) \, d\mu,
\]
and hence
\[
\Big |\, \vint{B(x,r)} u \, d\mu - \vint{B(y,r)} u \, d\mu \Big | 
\leq \frac{1}{\mu(B(y,r))} \int_{\Delta} |u-u_{B(x,r)} | \, d\mu.
\]
Now, let 
\[
A=B(x,r+d(x,y))\setminus B(x,r-d(x,y)).
\] 
Since the balls $B(x,r)$ and $B(y,r)$ have comparable measures and $\Delta \subset A$, 
\[
\Big| \, \vint{B(x,r)} u \, d\mu - \vint{B(y,r)} u \, d\mu \Big| 
\leq \frac{C}{\mu(B(x,r))} \int_{A} |u-u_{B(x,r)} | \, d\mu.
\]
We want to show that the right side is bounded by
$C d(x,y) \, (\vint{B(x,5r)} g^{q} \, d\mu)^{1/q}$, which implies (\ref{todistettava}). 
For that, we prove the following estimate for the integral over the annulus.
\medskip

\noindent {\bf Claim:}
\[
\frac{1}{\mu(B(x,r))} \int_{A} |u-u_{B(x,r)} | \, d\mu 
\leq C \, d(x,y) \, \vint{B(x,5r)} g(w) \log \frac{5 \, r}{|r - d(w,x)|} \, d\mu.
\]
To prove the claim, define $r_{k}=3^{k} r$ and $B_{k}(z)=B(z,r_{k})$, $k\in\z$. 

We use a standard chaining argument, Poincar\'e inequality \eqref{poincare}, and the fact that 
$B(x,r) \subset B_{1}(z)$ for each $z \in A$ to see that 
\[
|u(z) - u_{B(x,r)}|
\le|u(z) - u_{B_1(z)}|+|u_{B_1(z)} - u_{B(x,r)}|
\leq C \, \sum_{k \leq 1} r_{k} \vint{B_{k}(z)} g \, d\mu
\]
for all Lebesgue points $z \in A$. (Since almost every point is a Lebesgue point of $u$, this holds for almost all $z\in A$.)

Integration of both sides over $A$ and a use of the Fubini theorem yield
\[
\int_{A} |u(z) - u_{B(x,r)}| \, d\mu (z) 
\leq C \int_{X} g(w) \, K(w) \, d\mu (w),
\]
where
\[
K(w)= 
\int_{A} \Big(\sum_{k \leq 1} \frac{r_{k}}{\mu(B_{k}(z))} \chi_{B_{k}(z)} (w) \Big)\,d\mu (z).
\]
Since $k \leq 1$, we have that $K(w)=0$ when $w \notin B(x,5r)$. This implies that
\[
\int_{A} |u(z) - u_{B(x,r)}| \, d\mu (z) 
\leq C \int_{B(x,5r)} g(w) \, K(w) \, d\mu (w),
\]
where, by the fact that $\chi_{B_{k}(z)} (w) = \chi_{B_{k}(w)} (z)$,
\[
K(w)=
\int_{A} \Big(\sum_{k \leq 1} \frac{r_{k}}{\mu(B_{k}(z))} \chi_{B_{k}(w)} (z) \Big)\,d\mu (z).
\]
If $z \in B_{k}(w)$, then the balls $B_{k}(z)$ and $B_{k}(w)$ have comparable measures. Thus
\[
\int_{A} \frac{r_{k}}{\mu(B_{k}(z))} \chi_{B_{k}(w)} (z) \, d\mu (z) 
\leq C r_{k} \frac{\mu(A \cap B_{k}(w))}{\mu (B_{k}(w))},
\]
from which we obtain that
\[
K(w) 
\leq C \sum_{k \leq 1} r_{k} \frac{\mu(A \cap B_{k}(w))}{\mu(B_{k}(w))}.
\]
It follows from the relative $1$-annular decay that each term in the above sum is bounded by 
$C\min\{r_{k}, d(x,y)\}$. 
Moreover, for the indices $k \leq 1$ for which $B_{k}(w) \cap A= \emptyset$, the terms are zero. Now
\[
K(w) 
\leq C \sum_{k\in\mathcal K_1}r_k + C \sum_{k\in\mathcal K_2} d(x,y),
\]
where $\mathcal K_1=\{k\le1:r_{k} \le d(x,y)\}$ and 
$\mathcal K_2=\{k\le1:r_{k}>d(x,y),\,B_{k}(w) \cap A\ne \emptyset\}$.
The first term is at most $C d(x,y)$. 
An upper bound for the second term is
\begin{equation}\label{number of terms}
Cd(x,y) \log\frac{5r}{|r - d(w,x)|}. 
\end{equation}
To see this, we consider two cases. 
Assume first that $|r - d(w,x)| < 3d(x,y)$. Since $r_{k} > d(x,y)$, we have that 
\[
k > \frac{1}{\log 3} \log\frac{d(x,y)}{r}
> \frac{1}{\log 3} \log\frac{|r-d(w,x)|}{3r},
\] 
which implies that the amount of such indices in the sum is less than 
\[
C\log \frac{5r}{|r - d(w,x)|}.
\] 

The second case, $|r-d(w,x)| \geq 3 \, d(x,y)$, can be split to two parts. 
When $w \in B(x, r-3d(x,y))$, the requirement that $B_k(w)\cap A\ne\emptyset$ implies that
\[
3^kr=r_{k}\ge r - d(x,y) - d(w,x),
\]
and hence we have at most
\[
C\log\frac{r}{r-d(x,y)-d(w,x)}
\le C\log\frac{2r}{|r-d(w,x)|}
\]
such terms. (Note that the condition $B_k(w)\cap A\ne\emptyset$ gives the number of terms, not the condition $r_k>h$.)
Similarly, when $w \in 5B(x,r) \setminus B(x, r+3d(x,y))$, we have
\[
3^kr=r_{k}\ge d(w,x) - (r+d(x,y)),
\]
and the upper bound for the number of terms follows just as in the first case.
This implies that the second term is bounded above by \eqref{number of terms}.
Since $0 \le d(w,x) < 5r$, we have $\log \frac{5r}{|r - d(w,x)|}\ge\log\frac54$, and hence the claim follows.

Now, by the claim and the H\"{o}lder inequality,
\[
\frac{1}{\mu(B(x,r))} \int_{A} |u-u_{B(x,r)} | \, d\mu  
\leq Cd(x,y) \Big( \, \vint{B(x,5r)} g^{q} \, d\mu\Big)^{1/q}
\Big( \, \vint{B(x,5r)} L^{q'} \, d\mu\Big)^{1/q'},
\]
where $q'$ is the conjugate exponent of $q$ and 
\[
L(w)=\log \frac{5r}{|r - d(w,x)|}.
\]
To estimate the integral of $L^{q'}$ over $B(x,5r)$, we define for each $i=0,1,\dots$ 
\[
A_{i}= \{w \in B(x, 5r): 4^{-i}r \leq |r - d(w,x)| < 4^{-i+1}r\}.
\]
The sets $A_{i}$ are disjoint, and on each $A_{i}$ we have
\[
L(w) \le C(1+i).
\]
Moreover, the set $\{ w\in B(x, 5r):|r-d(w,x)|=0\}$ has measure zero by the relative annular decay, 
and so
\[
\mu\bigg(B(x,5r)\setminus\bigcup_{i=0}^{\infty} A_{i}\bigg)  = 0.
\]
It follows that
\[
\vint{B(x,5r)} L^{q'} \, d\mu 
\le C\sum_{i=0}^\infty \frac{(1+i)^{q'} \mu(A_{i})}{\mu(B(x,r))}.
\]
For $i \geq 2$, the set $A_{i}$ consists of two annuli of thickness $4^{-i}r$ centered at $x$. 
The inner and outer radii of these annuli are comparable to $r$. Thus the relative $1$-annular decay implies that
\[
\mu(A_{i}) \leq C \, 4^{-i} \mu(B(x,r)).
\]
The same estimate is trivial when $i=0$ or $i=1$. It follows that
\[
\vint{B(x,5r)} L^{q'} \, d\mu 
\le C\sum_{i=0}^\infty \frac{(1+i)^{q'}}{4^{i}}.
\]
This sum converges, which can be seen for example by a ratio test, and we have that
\begin{align*}
 \frac{1}{\mu(B(x,r))} \int_{A} |u-u_{B(x,r)} | \, d\mu 
&\le C \, d(x,y) \Big( \, \vint{B(x,5r)} g^{q} \, d\mu\Big)^{1/q}.
\end{align*}
Estimate (\ref{todistettava}) and hence the theorem follows from this.
\end{proof}

We close this section by considering the more general case where $u$ belongs to a fractional Sobolev space and $X$ satisfies the relative $\delta$-annular decay property. Using similar arguments as in the proof above, we obtain the following results.

\begin{theorem}\label{thm: sobo2}
Assume that $X$ satisfies the relative $\delta$-annular decay property \eqref{MM annulus}. 
Let $\alpha>0$,$1<q<p$, $s>0$, $u \in M^{s,p}(X)$ and $g\in\operatorname D^s(u)$. Then there is a constant $C>0$ such that
the following holds.
\begin{itemize}
\item[a)] If $s<\delta$, then
\[
\tilde{g}= C\M_{\alpha}g
\]
is a generalized $s$-gradient of $\M_\alpha u$.

\item[b)] If $s=\delta$, then 
\[
\tilde{g}= C(\M_{\alpha q} (g^{q}))^{1/q} 
\]
is a generalized $s$-gradient of $\M_\alpha u$.
\item[c)] If $s>\delta$, then
\[
\tilde{g}= C\M_{\alpha+s-\delta}g
\]
is a generalized $\delta$-gradient of $\M_\alpha u$.
\end{itemize}
\end{theorem}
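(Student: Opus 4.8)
The plan is to adapt the proof of Theorem~\ref{thm: sobo}, but to phrase the argument in terms of a \emph{near-maximizing radius} rather than a uniform-in-$r$ estimate, because the latter fails in case (c). First I would reduce to $u\ge 0$, since $|u|\in M^{s,p}(X)$ with the same generalized $s$-gradient $g$. Fixing $x,y\in X$ and assuming $\M_\alpha u(x)\ge\M_\alpha u(y)$, for $\eps>0$ I choose $r>0$ with $r^\alpha u_{B(x,r)}>\M_\alpha u(x)-\eps$; since $\M_\alpha u(y)\ge r^\alpha u_{B(y,r)}$, everything reduces to estimating $r^\alpha(u_{B(x,r)}-u_{B(y,r)})$, which I split according to whether $r>3d(x,y)$ or $r\le 3d(x,y)$. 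In each branch the target is a bound $Cd(x,y)^{s'}(\tilde g(x)+\tilde g(y))$ with $s'=s$ in (a),(b) and $s'=\delta$ in (c); letting $\eps\to 0$ and symmetrizing in $x,y$ then gives $|\M_\alpha u(x)-\M_\alpha u(y)|\le Cd(x,y)^{s'}(\tilde g(x)+\tilde g(y))$, which is the assertion.

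For $r>3d(x,y)$ I would follow the chaining argument of Theorem~\ref{thm: sobo} closely (in the notation of that proof), the only change being that the Poincar\'e inequality \eqref{poincare} now carries the exponent $s$. Telescoping yields $|u(z)-u_{B(x,r)}|\le C\sum_{k\le1}r_k^{\,s}\vint{B_k(z)}g\,d\mu$ on the annulus $A$, and after integrating and using Fubini the kernel is $K(w)=\sum_{k\le1}r_k^{\,s}\mu(A\cap B_k(w))/\mu(B_k(w))$. The relative $\delta$-annular decay \eqref{MM annulus} bounds each summand by $C\min\{r_k^{\,s},r_k^{\,s-\delta}d(x,y)^\delta\}$, and the three cases correspond exactly to the three summation regimes. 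For $s<\delta$ both pieces sum to $Cd(x,y)^s$, producing $\M_\alpha g$; for $s>\delta$ the series is dominated by its top term $Cr^{\,s-\delta}d(x,y)^\delta$, producing $\M_{\alpha+s-\delta}g$ after multiplying by $r^\alpha$; for $s=\delta$ the series diverges logarithmically, so I would import the refined estimate of Theorem~\ref{thm: sobo} based on $L(w)=\log\frac{5r}{|r-d(w,x)|}$ and the H\"older exponent $q$, which is exactly what forces $\tilde g=C(\M_{\alpha q}(g^q))^{1/q}$ in (b).

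For $r\le 3d(x,y)$ the cases split more sharply. In (a) and (b) the pointwise gradient bound $|u(z)-u(w)|\le d(z,w)^s(g(z)+g(w))\le Cd(x,y)^s(g(z)+g(w))$, integrated over $B(x,r)\times B(y,r)$ and multiplied by $r^\alpha$, gives $Cd(x,y)^s r^\alpha\vint{B(x,r)}g\,d\mu$ (or its $g^q$-version), controlled by $\M_\alpha g(x)$ or $(\M_{\alpha q}(g^q)(x))^{1/q}$; since $s'=s$, this suffices. In (c) the same estimate yields $d(x,y)^s$ instead of the required $d(x,y)^\delta$ and is genuinely too large when $r\ll d(x,y)$. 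Here I would instead use that $r$ is near-maximizing: comparing the radii $r$ and $2r$ gives $(2r)^\alpha u_{B(x,2r)}\le r^\alpha u_{B(x,r)}+\eps$, whence $u_{B(x,2r)}\le 2^{-\alpha}u_{B(x,r)}+C\eps r^{-\alpha}$, so that, up to an error that vanishes with $\eps$,
\[
u_{B(x,r)}\le C\big(u_{B(x,r)}-u_{B(x,2r)}\big)\le C\,r^{s}\vint{B(x,2r)}g\,d\mu,
\]
the last step being \eqref{poincare} together with doubling. Discarding the nonnegative term $r^\alpha u_{B(y,r)}$ then gives
\[
r^\alpha u_{B(x,r)}\le C\,r^{\alpha+s}\vint{B(x,2r)}g\,d\mu\le C\,r^{\delta}\M_{\alpha+s-\delta}g(x)\le C\,d(x,y)^\delta\M_{\alpha+s-\delta}g(x),
\]
using $r\le 3d(x,y)$ at the end.

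I expect this small-radius regime in case (c) to be the main obstacle: it is the one place where the uniform-in-$r$ reasoning of Theorem~\ref{thm: sobo} breaks down, and one must use that the supremum defining $\M_\alpha u(x)$ is essentially attained at the chosen scale in order to trade the uninformative average $u_{B(x,r)}$ for the gradient quantity $r^{s}\vint{B(x,2r)}g\,d\mu$. The borderline $s=\delta$ is the second delicate point, but it is resolved by transplanting the logarithmic $L(w)$/H\"older computation from Theorem~\ref{thm: sobo} without change. Assembling the two radius regimes, letting $\eps\to 0$, and symmetrizing in $x$ and $y$ then yields the desired generalized gradient inequality, and hence the theorem, in each of the three cases.
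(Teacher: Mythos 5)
Your proposal is correct in substance, and most of it coincides with the paper's own proof: the reduction to $u\ge 0$, the split at $r=3d(x,y)$, the large-radius estimate via the kernel $K(w)\le C\sum_{k\le 1}r_k^{s}\,\mu(A\cap B_k(w))/\mu(B_k(w))$, the bound $C\min\{r_k^{s},\,r_k^{s-\delta}d(x,y)^{\delta}\}$ coming from the relative $\delta$-annular decay, the three summation regimes, and the $L(w)$/H\"older treatment of the borderline $s=\delta$ are exactly the paper's steps. The genuinely different ingredient is your treatment of case (c) when $r\le 3d(x,y)$: you exploit near-maximality of the radius, comparing scales $r$ and $2r$ to absorb $u_{B(x,r)}$ into the Poincar\'e difference $u_{B(x,r)}-u_{B(x,2r)}$ (this uses $\alpha>0$ through $1-2^{-\alpha}>0$) and then discard $r^\alpha u_{B(y,r)}\ge 0$. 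The paper instead keeps the estimate uniform in $r$: for $z\in\{x,y\}$ it chains from $B(z,r)$ up to the common ball $B(x,3d(x,y))$ through the balls $B(z,2^ir)$, bounds each telescoped Poincar\'e term by $(2^ir)^{\delta}\M_{\alpha+s-\delta}g(z)$, and sums the geometric series to obtain $Cd(x,y)^{\delta}\M_{\alpha+s-\delta}g(z)$. In particular, your motivating premise --- that a uniform-in-$r$ estimate ``fails in case (c)'' --- is mistaken: what fails is only the one-step pointwise-gradient bound, and chaining through the scales between $r$ and $d(x,y)$ restores uniformity. Your variant does prove the theorem, but the uniform formulation buys two things. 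First, the claim is established for each function $x\mapsto r^\alpha u_{B(x,r)}$ at once, so the conclusion for $\M_\alpha u=\sup_{r>0} r^\alpha u_{B(\cdot,r)}$ follows by taking suprema, with no $\eps$-bookkeeping. Second, it remains meaningful when $\M_\alpha u(x)=\infty$, a situation not excluded here: unlike Theorem \ref{thm: sobo} there is no measure lower bound (which there yields finiteness a.e.\ via Theorem \ref{fracM bounded}), and unlike Theorem \ref{main thm} there is no hypothesis $\M_\alpha u\not\equiv\infty$. In that degenerate situation a near-maximizing radius gives you nothing, since $(2r)^\alpha u_{B(x,2r)}\le\M_\alpha u(x)$ is then vacuous, so your case (c) would need a separate, chaining-type patch precisely of the kind the paper uses.
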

\begin{proof} It suffices to prove the claim for the functions $x\mapsto r^\alpha u_{B(x,r)}$, $r>0$. 
Fix $x,y\in X$ and $r>0$. Suppose first that $r\le 3d(x,y)$. 
If $s\le\delta$, the desired estimate follows as in the proof of Theorem \ref{thm: sobo}. 
If $s>\delta$,  we need a simple chaining argument. 
Let $k$ be the smallest integer such that $2^kr\ge 4d(x,y)$ and let $z\in\{x,y\}$. 
Then the doubling property, the fact that 
$B(x,3d(x,y))\subset B(z,2^kr)$ and the Poincar\'e inequality \eqref{poincare} imply that
\[
\begin{split}
|u_{B(z,r)}-u_{B(x,3d(x,y))}|
&\le \sum_{i=0}^{k-1}|u_{B(z,2^ir)}-u_{B(z,2^{i+1}r)}|+|u_{B(z,2^kr)}-u_{B(x,3d(x,y))}|\\
&\le C\sum_{i=1}^k\vint{B(z,2^ir)}|u-u_{B(z,2^ir)}|\,d\mu\\
&\le C\sum_{i=1}^k(2^ir)^s\vint{B(z,2^ir)}g\,d\mu.
\end{split}
\]
Hence, using the selection of $k$, we have that
\[
\begin{split}
r^\alpha|u_{B(z,r)}-u_{B(x,3d(x,y))}|
&\le C\sum_{i=1}^k(2^ir)^{\alpha+s}\vint{B(z,2^ir)}g\,d\mu\\
&\le C\sum_{i=1}^k(2^ir)^{\delta}\M_{\alpha+s-\delta}g(z)\\
&\le Cd(x,y)^\delta \M_{\alpha+s-\delta}g(z),
\end{split}
\]
and so
\[
\begin{split}
|r^\alpha u_{B(x,r)}-r^\alpha u_{B(y,r)}|
&\le r^\alpha|u_{B(x,r)}-u_{B(x,3d(x,y))}|+r^\alpha|u_{B(y,r)}-u_{B(x,3d(x,y))}|\\
&\le Cd(x,y)^\delta\left(\M_{\alpha+s-\delta}g(x)+\M_{\alpha+s-\delta}g(y)\right).
\end{split}
\]

Suppose then that $r>3d(x,y)$.
By following the proof of Theorem \ref{thm: sobo}, we obtain the estimate
\begin{equation}\label{estimate}
|u_{B(x,r)}-u_{B(y,r)}|\le C\vint{B(x,5r)}g(w)K(w)\,d\mu(w),
\end{equation}
where
\[
K(w) 
\leq C \sum_{k \leq 1} r_{k}^s\ \frac{\mu(A \cap B_{k}(w))}{\mu(B_{k}(w))}.
\]
The relative $\delta$-annular decay  
implies that
\[
K(w) 
\leq C\Big( \sum_{k\in\mathcal K_1}r_k^s 
+  d(x,y)^\delta\sum_{k\in\mathcal K_2}r_k^{s-\delta}\Big) ,
\]
where $\mathcal K_1=\{k\le1:r_{k} \le d(x,y)\}$ and 
$\mathcal K_2=\{k\le1:r_{k}>d(x,y),\,B_{k}(w) \cap A\ne \emptyset\}$.
It follows that
\[
K(w)\le C
\begin{cases}
d(x,y)^s, &\text{ if } s<\delta\\
d(x,y)^s \log\frac{5r}{|r - d(w,x)|}, &\text{ if } s=\delta\\
d(x,y)^{\delta}r^{s-\delta}, &\text{ if } s>\delta\\
\end{cases}.
\]
If $s<\delta$ or $s>\delta$, the claim follows by combining the above estimate with \eqref{estimate} and multiplying the resulting inequality by $r^\alpha$.
In the case $s=\delta$, we argue as in the proof of Theorem \ref{thm: sobo}.
\end{proof}

\begin{corollary}
Assume that the measure $\mu$ satisfies the lower bound condition and $X$ satisfies the relative 
$\delta$-annular decay property \eqref{MM annulus}. 
Let $p>1$, $s>0$ and $u \in M^{s,p}(X)$. 

If $s\le\delta$ and $0<\alpha<Q/p$, then $\M_{\alpha} u \in M^{s,p^{*}}(X)$ with $p^{*}= Qp/(Q - \alpha p)$
and there is a constant $C>0$, independent of $u$, such that
\[
\|\M_{\alpha} u\|_{M^{s,p^{*}}(X)} \le C \, \|u\|_{M^{s,p}(X)}.
\]
If $s\ge\delta$ and $\alpha+s-\delta<Q/p$, then $\M_{\alpha} u \in\dot M^{\delta,q}(X)\cap L^{p^*}(X),$ where $q= Qp/(Q - (\alpha+s-\delta)p)$ and $\dot M^{\delta,q}(X)$ is the homogeneous Hajlasz space equipped with the seminorm $\|u\|_{\dot M^{\delta,q}(X)}=\inf_{g\in \operatorname D^\delta(u)}\|g\|_{L^{q}(X)}$.
Moreover, there is a constant $C$, independent of $u$, such that
\[
\|\M_{\alpha} u\|_{\dot M^{\delta,q}(X)}+ \|\M_{\alpha} u\|_{L^{p*}(X)} \le C \, \|u\|_{M^{s,p}(X)}.
\]
 \end{corollary}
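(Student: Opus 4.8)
The plan is to deduce this Corollary directly from the pointwise gradient bounds of Theorem \ref{thm: sobo2}, combined with the mapping properties of the fractional maximal operator recorded in Theorem \ref{fracM bounded}. The main work --- producing a generalized gradient of $\M_\alpha u$ --- is already contained in Theorem \ref{thm: sobo2}, so all that remains is to measure the norm of that gradient in the correct Lebesgue space and to bound $\M_\alpha u$ itself in $L^{p^*}$. Throughout I fix $u\in M^{s,p}(X)$ and a generalized gradient $g\in L^p(X)\cap\operatorname D^s(u)$, and treat the two ranges of $s$ separately.

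Consider first $s\le\delta$ with $0<\alpha<Q/p$. When $s<\delta$, Theorem \ref{thm: sobo2}(a) shows that $C\M_\alpha g$ is a generalized $s$-gradient of $\M_\alpha u$; applying Theorem \ref{fracM bounded} to $g$ with exponent $p$ and order $\alpha$ yields $\|\M_\alpha g\|_{L^{p^*}}\le C\|g\|_{L^p}$, so this gradient lies in $L^{p^*}$. When $s=\delta$, the gradient supplied by Theorem \ref{thm: sobo2}(b) is $\tilde g=C(\M_{\alpha q}(g^q))^{1/q}$, and I would reuse verbatim the computation from the proof of Theorem \ref{thm: sobo}: apply Theorem \ref{fracM bounded} to $g^q$ at integrability $p/q>1$ and order $\alpha q$, which is admissible because $\alpha q<Qq/p$, and conclude $\|\tilde g\|_{L^{p^*}}\le C\|g\|_{L^p}$ using the identity $q(p/q)^*=p^*$. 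In either subcase $u\in L^p(X)$, so Theorem \ref{fracM bounded} also gives $\|\M_\alpha u\|_{L^{p^*}}\le C\|u\|_{L^p}$. Adding the two estimates places $\M_\alpha u$ in $M^{s,p^*}(X)$ with norm controlled by $\|u\|_{M^{s,p}(X)}$.

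Now suppose $s\ge\delta$ with $\alpha+s-\delta<Q/p$. For $s>\delta$, Theorem \ref{thm: sobo2}(c) provides the generalized $\delta$-gradient $C\M_{\alpha+s-\delta}g$ of $\M_\alpha u$, and Theorem \ref{fracM bounded} applied to $g$ with order $\alpha+s-\delta$ gives $\|\M_{\alpha+s-\delta}g\|_{L^q}\le C\|g\|_{L^p}$ for $q=Qp/(Q-(\alpha+s-\delta)p)$, which is precisely the required bound for the seminorm of $\dot M^{\delta,q}(X)$. The boundary value $s=\delta$ reduces to the computation just made, since there $\alpha+s-\delta=\alpha$ forces $q=p^*$. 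For the Lebesgue bound on $\M_\alpha u$ itself, I observe that $\alpha\le\alpha+s-\delta<Q/p$, so Theorem \ref{fracM bounded} once more yields $\|\M_\alpha u\|_{L^{p^*}}\le C\|u\|_{L^p}$, which together with the seminorm estimate finishes this case.

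The only step demanding real care, and the likely source of arithmetic slips, is the exponent bookkeeping in the $s=\delta$ situation: one must apply Theorem \ref{fracM bounded} to the power $g^q$ rather than to $g$, at the shifted integrability $p/q$ and order $\alpha q$, and then check that raising the output to the power $1/q$ lands exactly in $L^{p^*}$ through $q(p/q)^*=p^*$. Beyond this, the argument is just substitution into the two cited theorems, with the side conditions $\alpha<Q/p$ and $\alpha+s-\delta<Q/p$ guaranteeing that each application of Theorem \ref{fracM bounded} is legitimate.
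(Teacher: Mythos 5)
Your proposal is correct and follows exactly the paper's route: the paper's proof is the single line ``The claim follows from Theorems \ref{thm: sobo2} and \ref{fracM bounded}'', and your argument simply spells out the case analysis and exponent bookkeeping (including the $q(p/q)^*=p^*$ identity for $s=\delta$ and the observation $\alpha\le\alpha+s-\delta<Q/p$) that the authors leave implicit.
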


\begin{proof}
The claim follows from Theorems \ref{thm: sobo2} and \ref{fracM bounded}.
\end{proof}

For related results concerning the discrete fractional maximal operator, see \cite{HT}.

\begin{remark}
The usual modifications in the proofs show that Theorems \ref{thm: sobo} and \ref{thm: sobo2}
remain true for the noncentered fractional maximal function.
\end{remark}

\section{Examples}\label{section: example}
We modify the example given by Buckley \cite{B} a little bit and show that the fractional maximal function of  a Lipschitz function may fail to be continuous.

\begin{example}
First we recall Buckley's example. 
Let $X$ be the subset of the complex plane consisting of the real line and the points $x$ on the unit circle whose argument $\theta$ lies in the interval $[0, \frac{\pi}{2}]$. 
Equip $X$ with the Euclidean metric and the $1$-dimensional Hausdorff measure. 
Let $u\colon X\to[0,1]$ be a Lipschitz function such that $u(x)=0$, if 
$x\in\re$ or $\text{Arg}(x)\le \pi/5$, and
$u(x)=1$, if $\text{Arg}(x)\ge \pi/4$. Then $\M u$ has a jump discontinuity at the origin.
Indeed, since 
\[
\M u(0)=\lim_{r\to 1+}\vint{B(0,r)}u\,d\mu
=\frac{1}{2+\frac{\pi}{2}}\int_{\overline{B}(0,1)}|u|\,d\mu,
\]
we have that
\[
\M u(0)\le\frac{\frac{\pi}{2}-\frac{\pi}{5}}{2+\frac{\pi}{2}}=\frac{3\pi}{20+5\pi}.
\]
If $x<0$,  then $B(x,r(x))$, where  $r(x)=d(x,e^{i\pi/4})$,  includes points on the arc if and only if their argument exceeds 
$\frac{\pi}{4}$. It follows that
\[
\lim_{x\to 0-}\M u(x)\ge \lim_{x\to 0-}\vint{B(x,r(x))}|u|\,d\mu
=\frac{\frac{\pi}{4}}{2+\frac{\pi}{4}}=\frac{\pi}{8+\pi}>\M u(0).
\]

The first part of the above argument does not work for $\M_\alpha$, because for $\alpha$ large enough, 
\[
r^\alpha\vint{B(0,r)}|u|\,d\mu
=\frac{r^\alpha}{2r+\frac{\pi}{2}}\int_{\overline{B}(0,1)}|u|\,d\mu
\]
no longer maximizes as $r\to 1+$.
This difficulty can be overcome by modifying the measure. 
Let $w\colon X\to\re$ be a weight such that $w(x)=1+\frac{\pi}{2}$, when $x>1$ and $w(x)=1$ otherwise. 
Then the measure $\nu$, defined by
\[
\nu(A)=\int_Aw\,d\mu, 
\]
is doubling and satisfies $\nu(B(0,r))=(2+\frac{\pi}{2})r$, for $r>1$.
If $\alpha>1$, it follows that $\M_\alpha u\equiv\infty$.
If $\alpha\le 1$, then, for $r>1$,
\[
r^\alpha\vint{B(0,r)}u\,d\nu
=\frac{r^\alpha}{(2+\frac{\pi}{2})r}\int_{\overline{B}(0,1)}|u|\,d\mu
\le \frac{1}{2+\frac{\pi}{2}}\int_{\overline{B}(0,1)}|u|\,d\mu,
\]
which implies that 
\[
\M_\alpha u(0)
=\frac{1}{2+\frac{\pi}{2}}\int_{\overline{B}(0,1)}|u|\,d\mu.
\]
The rest of the argument is the same as above.
\end{example}

The next example shows that also the noncentered fractional maximal function of a Lipschitz function
is not necessarily continuous.
\begin{example}
Let $X=(\mathbb R\times\{0\})\cup (\{0\}\times (-\infty,1])\subset\re^2$. 
Equip $X$ with the metric $d(x,y)=\max\{|x_1-y_1|,|x_2-y_2|\}$ and the $1$-dimensional Hausdorff measure $\mu$. 
Define $u\colon X\to\mathbb R$ by setting $u(x)=x_2$ for $0< x_2\le 1$ and $u(x)=0$ otherwise.
Then it is easy to see that $\mathcal{\widetilde M}_\alpha u$, $0\le\alpha\le 1$, is not continuous at the origin.
If $x_2>0$, then clearly $\mathcal{\widetilde M}_\alpha u(x)\ge\frac12$.
We will show that $\mathcal{\widetilde M}_\alpha u(0)\le\frac13$.
Suppose that $B(x,r)$ contains the origin. If $r\le\frac12$, then 
\[
r^\alpha\vint{B(x,r)}u\,d\mu\le \frac{r^\alpha}{4r}\cdot 2r^2\le\frac14.
\]
If $r>\frac12$, then 
\[
r^\alpha\vint{B(x,r)}u\,d\mu\le \frac{r^\alpha}{3r}\cdot\frac12\le\frac13.
\]
Hence $\mathcal{\widetilde M}_\alpha u$ is not continuous at the origin.
\end{example}

\subsection*{Acknowledgements}The research is supported  by
the Academy of Finland, grants no.\ 135561 and 252108.

\vspace{0.5cm}
\noindent
\small{\textsc{T.H.},}
\small{\textsc{Department of Mathematics},}
\small{\textsc{P.O. Box 11100},}
\small{\textsc{FI-00076 Aalto University},}
\small{\textsc{Finland}}\\
\footnotesize{\texttt{toni.heikkinen@aalto.fi}}

\vspace{0.3cm}
\noindent
\small{\textsc{J.L.},}
\small{\textsc{Department of Mathematics and Statistics},}
\small{\textsc{P.O. Box 35},}
\small{\textsc{FI-40014 University of Jyv\"askyl\"a},}
\small{\textsc{Finland}}\\
\footnotesize{\texttt{juha.lehrback@jyu.fi}}

\vspace{0.3cm}
\noindent
\small{\textsc{J.N.},}
\small{\textsc{Department of Mathematics and Statistics},}
\small{\textsc{P.O. Box 35},}
\small{\textsc{FI-40014 University of Jyv\"askyl\"a},}
\small{\textsc{Finland}}\\
\footnotesize{\texttt{juho.nuutinen@jyu.fi}}

\vspace{0.3cm}
\noindent
\small{\textsc{H.T.},}
\small{\textsc{Department of Mathematics and Statistics},}
\small{\textsc{P.O. Box 35},}
\small{\textsc{FI-40014 University of Jyv\"askyl\"a},}
\small{\textsc{Finland}}\\
\footnotesize{\texttt{heli.m.tuominen@jyu.fi}}

\end{document}